\documentclass[12pt]{article}
\usepackage{amsfonts}
\usepackage{stmaryrd}
\textheight 23.cm \textwidth 15.8cm
\topmargin -1.5cm \oddsidemargin 0.3cm \evensidemargin -0.3cm
\usepackage{graphicx}
\usepackage{epsfig}
\usepackage{amsmath}
\usepackage{amsthm}
\usepackage{amssymb}
\usepackage{mathrsfs}
\usepackage{float}
\usepackage{multirow}
\usepackage{verbatim}
\usepackage{fancyhdr}
\usepackage{subfigure}
\usepackage{color}
\usepackage{mathtools}
\usepackage{mathrsfs}
\usepackage{sectsty}
\usepackage[title]{appendix}
\usepackage{threeparttable}
\usepackage{dcolumn}
\usepackage{booktabs}
\usepackage{indentfirst}
\usepackage{setspace}
\usepackage{bm}
\usepackage{enumerate}

\usepackage[labelfont=bf,labelsep=period]{caption}
\newcolumntype{d}[1]{D{.}{.}{#1}}% or D{.}{,}{#1} or D{.}{\cdot}{#1}

\newtheorem{Thm}{Theorem}[section]
\newtheorem{Ex}{Example}[section]

\newtheorem{Lem}{Lemma}[section]

\newtheorem{Pro}{Proposition}[section]
\newtheorem{Remark}{Remark}[section]

\newcommand{\jr}{j+\frac{1}{2}}
\newcommand{\jl}{j-\frac{1}{2}}
\newcommand{\ir}{i+\frac{1}{2}}
\newcommand{\il}{i-\frac{1}{2}}
\newcommand{\rev}{}

\allowdisplaybreaks
\begin{document}
\pagenumbering{arabic}
\baselineskip=2pc

\vspace*{0.5in}

\begin{center}

{{\bf Sub-optimal convergence of discontinuous Galerkin methods with
central fluxes {\rev{for linear hyperbolic equations}} with
even degree polynomial approximations}}

\end{center}

\vspace{.03in}

\centerline{Yong Liu\footnote{School of Mathematical Sciences,
 University of Science and Technology of China,
Hefei, Anhui 230026, P.R. China.  E-mail: yong123@mail.ustc.edu.cn. Research supported by the China Scholarship Council.}
, Chi-Wang Shu\footnote{Division of
Applied Mathematics, Brown University, Providence, RI 02912, USA.
E-mail: chi-wang\_shu@brown.edu.  Research supported by NSF grant DMS-1719410.}
and Mengping Zhang\footnote{School of Mathematical Sciences,
University of Science and Technology of China,
Hefei, Anhui 230026, P.R. China, E-mail: mpzhang@ustc.edu.cn.
Research supported by NSFC grant 11871448.}}

\vspace{.1in}

\centerline{\bf Abstract}

\smallskip
In this paper, we theoretically and numerically verify that the discontinuous Galerkin
(DG) methods with central fluxes {\rev{for linear hyperbolic equations}} on non-uniform meshes have sub-optimal convergence
properties when measured in the $L^2$-norm for even degree polynomial approximations.
On uniform meshes, the optimal error estimates are provided for arbitrary number of
cells in one and multi-dimensions, improving previous results. The theoretical findings
are found to be sharp and consistent with numerical results.

\vfill

\noindent
\textbf{Key Words:} Discontinuous Galerkin method;
central flux; sub-optimal convergence rates

\pagenumbering{arabic}

\section{Introduction}
\label{sec1}
\setcounter{equation}{0}
\setcounter{figure}{0}
\setcounter{table}{0}

A fundamental form of energy transmission is wave propagation, which arises in many fields
of science, engineering and industry, such as petroleum engineering, geoscience,
telecommunication, and the defense industry (see \cite{Durran1999,KAMPANIS2008}). It is
important for these applications to study efficient and accurate numerical methods to
solve wave propagation problems. Experience reveals that energy-conserving numerical
methods, which conserve the discrete approximation of energy, are favorable, because they
are able to maintain the phase and shape of the waves more accurately, especially for
long-time simulation.

Various numerical approximations of wave problems modeled by linear hyperbolic systems
can be found in the literature. Here, we will focus on the classical Runge-Kutta DG
method of Cockburn and Shu \cite{RKDG2001}. There are several approaches to obtain an
optimal, energy conserving DG method.  Chung and Engquist \cite{Chung2009} presented an
optimal, energy conserving DG method for the acoustic wave equation on staggered grids.
Chou et al. \cite{ChouXing2014} proposed an optimal energy conserving DG using
alternating fluxes for the second order wave equation.  More recently, Fu and Shu
\cite{fushu2018} developed an optimal energy conserving DG method by introducing an
auxiliary zero function.

As is well known, the simplest energy conserving DG method for hyperbolic equations is
the one using central fluxes.  However, it has sub-optimal convergence of order $k$
measured in the $L^2$-norm when piece-wise polynomials of an {\em odd} degree $k$ are used;
see, e.g. \cite{MSW}.  When $k$ is even, we usually observe higher convergence rates than
$k$th order for a general regular non-uniform meshes, such as random perturbation over an
uniform mesh, see section \ref{Num}. In fact, many papers have mentioned that the optimal
convergence rates can be observed when even degree polynomials are used; see for
example \cite{YXINGKDV,Ycheng2017,MSW,JieDu2019Stability}. In this paper, we provide a counter example to
show that the scheme only has sub-optimal error accuracy of order $k$ for a regular
non-uniform mesh, when $k$ is even.  We refer to the work of Guzm\'{a}n and Rivi\`{e}re
\cite{JGuzmanJSC2009} in which they constructed a special mesh sequence to produce the
sup-optimal accuracy for the non-symmetric DG methods for elliptic problems when $k$ is
odd.  For uniform meshes, the classical DG scheme with the central flux does have
the optimal convergence rate $k+1$, observed in the numerical experiments and proved
theoretically under the condition that the number of cells in the mesh is $odd$
\cite{YXINGKDV,MSW}. In this paper, we provide a new proof which is available for
arbitrary number of cells and dimensions {\rev{for linear hyperbolic equations}}. We have used the shifting technique
\cite{liu2018cdg,liu2019pk} to construct the special local projection to obtain the
optimal error estimate on uniform meshes.  We also numerically find the superconvergence
phenomenon for the cell averages and numerical fluxes.

The outline of the paper is as follows. In section \ref{sec2}, we review the DG scheme for
hyperbolic equations with central fluxes and give the error estimates for the
semi-discrete version in one dimension. We extend our analysis to multi-dimensions
in section \ref{twodimension}. In section \ref{Num}, we give numerical examples to show
the sub-optimal convergence for non-uniform meshes and optimal convergence for uniform
meshes in both one and two-dimensional cases. Finally, we give concluding remarks in
section \ref{concluding}.  Some of the technical proof of the lemmas and propositions
is included in the Appendix \ref{appendix}.

\section{One dimensional problems}
\label{sec2}
\setcounter{equation}{0}
\setcounter{figure}{0}
\setcounter{table}{0}

We consider the following one dimensional linear hyperbolic equation
\begin{equation}
\label{1dlinear}
\left\{
\begin{aligned}
&u_{t}+u_{x}=0, \ x \in [0,1], \ t\geq 0\\
&u(x,0)=u_{0}(x), \ x \in [0,1],
\end{aligned}
\right.
\end{equation}
with periodic boundary condition. We first introduce the usual notations of the DG method.
For a given interval $\Omega=[0,1]$ and the index set $\mathbb{Z}_N=\{1,2,\ldots,N\}$, the usual DG mesh $\mathcal{I}_N$ is defined as:
\begin{align}
0=x_{\frac{1}{2}}<x_{\frac{3}{2}}<\ldots < x_{N+\frac{1}{2}}=1.
\end{align}
We denote
\begin{align}
I_{j}=(x_{j-\frac{1}{2}},x_{j+\frac{1}{2}}), \quad x_{j}=\frac{1}{2}(x_{j-\frac{1}{2}}+x_{j+\frac{1}{2}}),\quad h_j=x_{j+\frac{1}{2}}-x_{j-\frac{1}{2}}, \quad j \in \mathbb{Z}_N.
\end{align}
We also assume the mesh is regular, i.e., the ratio between the maximum and minimum mesh sizes shall stay bounded during mesh refinements. That means there exists a positive constant $\sigma\geq 1$, such that,
\begin{align}\label{meshreg}
\frac{1}{\sigma}h\leq h_j \leq \sigma h, \,\, h = \frac{1}{N}, \quad  \forall j \in \mathbb{Z}_N.
\end{align}
 We define the approximation space as
\begin{align}
V_h^k=\{v_h: (v_h)|_{I_j} \in \mathbb{P}^{k}(I_j),j=1,\ldots,N\}.
\end{align}
Here $\mathbb{P}^{k}(I_j)$ denotes the set of all polynomials of degree at most $k$ on $I_j$. We first introduce some standard Sobolev space notations. For any integer $m>0$, $W^{m,p}(D)$ denote the standard Sobolev spaces on the sub-domain $D\subset\Omega$ equipped with the norm $\|\cdot\|_{m,p,D}$ and the semi-norm $|\cdot|_{m,p,D}$. If $p=2$, we set $W^{m,p}(D)=H^m(D)$, and $|\cdot|_{m,p,D}=|\cdot|_{m,D}$ and we omit the index $D$, when $D=\Omega$.

The semi-discrete DG scheme is to seek $u_h\in V_h$ such that for all $v_h \in V_h$,
\begin{align}\label{DGscheme}
((u_h)_t,v_h)_j+a_j(u_h,v_h)=0, \quad \forall j \in \mathbb{Z}_N,
\end{align}
where
\begin{align}
\label{DGoperator}
a_{j}(u_h,v_h)=-(u_h,(v_h)_x)_j + \hat{u_h}v_h^{-}|_{\jr}-\hat{u_h}v_h^+|_{\jl},
\end{align}
where $(u,v)_j=\int_{I_j}u v \, dx$, $v^-|_{\jr}$ and $v^+|_{\jr}$
denote the left and right limits of $v$ at the point $x_{\jr}$,
respectively, and $\hat{u_h}$ are the numerical fluxes. Here, we consider the central flux,
\begin{align}\label{centralflx}
\hat{u_h}=\{u_h\}=\frac{1}{2}(u_h^-+u_h^+).
\end{align}
{\rev{
For the central flux, we have,
\begin{align}
\label{energy-conserving}
\sum_{j=1}^N a_j(u_h,u_h)=0, \quad \forall u_h \in V_h.
\end{align}
}}
The initial datum $u_h(x,0)=Pu_0$ is obtained by the standard $L^2$ projection,
\begin{align}
(u_0-Pu_0,v_h)_j=0, \quad \forall v_h \in \mathbb{P}^k(I_j).
\end{align}
Thus, we have,
\begin{align}\label{initerr}
\|u_0-u_h(\cdot,0)\| \lesssim h^{k+1}\|u\|_{k+1}.
\end{align}
Here and below, an unmarked norm $\|\cdot\|$ denotes the $L^2$ norm, and $A\lesssim B$ denotes that $A$ can be bounded by $B$ multiplied by a constant independent of the mesh size $h$.
As mentioned earlier, we have the following energy-conserving results \cite{MSW}.
\begin{Thm}\label{stab}
Suppose $u_h$ is the solution of DG scheme (\ref{DGscheme}), then it satisfies
\begin{align}
\frac{d}{dt}\|u_h\|^2=0.
\end{align}
\end{Thm}
Next we consider the error estimate, first we recall the following basic facts \cite{ciarlet2002finite}. For any function $w_h\in V_h$,
\begin{align}\label{inverseineq}
&(i) \ \|(w_h)_x\| \lesssim h^{-1} \|w_h\|,\nonumber\\
&(ii)\  \|w_h\|_{\Gamma_h}\lesssim h^{-\frac{1}{2}} \|w_h\| ,
\end{align}
where $\Gamma_h$ denotes the set of boundary points of all elements $I_j$, and
the norm $\|w_h\|_{\Gamma_h}=\left(\sum_{j=1}^N((w_h)_{\jr}^+)^2+((w_h)_{\jl}^-)^2\right)^{\frac{1}{2}}$.
In order to obtain the optimal error estimate for the case of uniform meshes,
we need to use the shifting technique \cite{liu2018cdg,liu2019pk} to construct a
special projection $P_h^{\star}$, which is defined as follows.
For any given function $w\in L^{\infty}(\Omega)$ and each $j$,
\begin{align}\label{shiftprojection}
\int_{I_j}P_{h}^\star w(x) \,dx =\int_{I_j} w(x) \, dx,\\
\widetilde{P_h}(P_{h}^\star w; v)_j=\widetilde{P_h}( w; v)_j \quad \forall v\in \mathbb{P}^{k}(I_j),
\end{align}
where $\widetilde{P_h}( w; v)_j$ is defined as
\begin{align}
\widetilde{P_h}( w; v)_j=-(w,v_x)_j+\frac{w(x_{\jr}^-)+w(x_{\jl}^+)}{2}(v(x_{\jr}^-)-v(x_{\jl}^+)).
\end{align}
Note that the projection $P_h^\star$ is a local projection, so we only consider the projection defined on the
reference interval $[-1,1]$. We have the following lemma to establish the fact that the projection is well defined.
\begin{Lem}\label{lemprojection}
When $k$ is even, the projection $P_h^\star$ defined by (\ref{shiftprojection}) on the interval $[-1,1]$ exists and is unique for any $L^\infty$ function $w$, and the projection is bounded in the $L^{\infty}$ norm, i.e.,
\begin{align}\label{projectionbound}
\|P_h^\star w\|_{\infty} \leq C(k) \|w\|_{\infty},
\end{align}
where $C(k)$ is a constant that only depends on $k$  but is independent of $w$.
\end{Lem}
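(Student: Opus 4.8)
The plan is to work entirely on the reference interval $[-1,1]$, where the endpoint limits become $x_{\jr}^- \to 1$ and $x_{\jl}^+ \to -1$, and to recast the two defining relations (\ref{shiftprojection}) as a single square linear system on the $(k+1)$-dimensional space $\mathbb{P}^k([-1,1])$. First I would observe that taking $v\equiv 1$ in the functional $\widetilde{P_h}(\cdot\,;v)$ gives $v_x=0$ and $v(1)-v(-1)=0$, so $\widetilde{P_h}(w;1)=0$ for every $w$; hence the second relation in (\ref{shiftprojection}) is vacuous on constants and supplies exactly $k$ independent equations (test against $L_1,\dots,L_k$), while the average condition supplies the remaining one. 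Thus the system is square, and it suffices to prove injectivity: the homogeneous problem has only the solution $p=0$. Existence and uniqueness then follow simultaneously.

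For the injectivity step, which is the heart of the proof, I would first integrate by parts to rewrite, for $p\in\mathbb{P}^k$,
\[
\widetilde{P_h}(p;v)=\int_{-1}^{1}p_x\,v\,dx+\frac{p(-1)-p(1)}{2}\bigl(v(1)+v(-1)\bigr),
\]
so that the homogeneous conditions read $\int_{-1}^1 p\,dx=0$ together with $\widetilde{P_h}(p;v)=0$ for all $v\in\mathbb{P}^k$. Expanding $p=\sum_{n=0}^k c_n L_n$ in Legendre polynomials and testing against $v=L_m$, I would use the standard facts $L_n(\pm1)=(\pm1)^n$, $\int_{-1}^1 L_m L_n=\frac{2}{2n+1}\delta_{mn}$, and $\int_{-1}^1 L_n' L_m\,dx=2$ when $n>m$ and $n+m$ is odd (and $0$ otherwise). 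The average condition forces $c_0=0$. Writing $S=\sum_{n\text{ odd}}c_n$, the test equation for even $m$ collapses to $\sum_{n\le m,\,n\text{ odd}}c_n=0$, and for odd $m$ to $\sum_{n>m,\,n\text{ even}}c_n=0$. Running the even-$m$ relations upward ($m=2,4,\dots,k$) kills the odd coefficients $c_1,c_3,\dots,c_{k-1}$ in turn, and running the odd-$m$ relations downward ($m=k-1,k-3,\dots,1$) kills the even coefficients $c_k,c_{k-2},\dots,c_2$; with $c_0=0$ this gives $p=0$.

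The main obstacle is precisely making these two inductions close, and this is where the hypothesis that $k$ is \emph{even} is essential: it guarantees that the odd indices exhaust $\{1,3,\dots,k-1\}$ and the even indices exhaust $\{0,2,\dots,k\}$, so the two chains of relations have exactly the right lengths to eliminate every coefficient (for odd $k$ the count is off and the system is in fact singular). Finally, for the $L^{\infty}$ bound I would exploit finite dimensionality: the coefficient vector $(c_0,\dots,c_k)$ is obtained by applying the inverse of a fixed, $k$-dependent invertible matrix to data that depend linearly on $w$ through $\int_{-1}^1 w\,dx$, the integrals $\int_{-1}^1 w\,L_m'\,dx$, and the boundary values $w(\pm1)$, each of which is bounded by a constant times $\|w\|_{\infty}$. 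Hence $\sum_n |c_n|\le C(k)\|w\|_{\infty}$, and since $\|L_n\|_{\infty,[-1,1]}=1$ we conclude $\|P_h^\star w\|_{\infty}\le\sum_n|c_n|\,\|L_n\|_\infty\le C(k)\|w\|_{\infty}$, as claimed.
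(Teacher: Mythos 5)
Your proof is correct, and on the injectivity step it takes a noticeably different route from the paper's. The paper's argument is shorter and avoids any expansion: it first tests the homogeneous condition with $v=x$ and combines with the zero-average condition to conclude $w_I(1)+w_I(-1)=0$; this kills the boundary term in $\widetilde{P_h}(w_I;v)$ for \emph{every} $v$, so the remaining conditions say $w_I\perp \mathbb{P}^{k-1}$, forcing $w_I=w_kL_k$, and then $L_k(1)+L_k(-1)=2$ for even $k$ gives $w_k=0$. You instead integrate by parts, expand in Legendre polynomials, and solve the resulting triangular system by a double induction over even and odd test indices; I checked the reduction (for even $m$ to $\sum_{n\le m,\,n\ \mathrm{odd}}c_n=0$, for odd $m$ to $\sum_{n>m,\,n\ \mathrm{even}}c_n=0$) and the two sweeps do close precisely because $k$ is even, so the argument is sound. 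What the paper's version buys is brevity and a transparent identification of where evenness enters (the single scalar $L_k(1)+L_k(-1)$); what your version buys is an explicit inversion of the system, which feeds directly into your proof of the $L^\infty$ bound --- a bound the paper does not prove at all but merely cites from \cite{liu2018cdg,liu2019pk}. Your finite-dimensionality argument for (\ref{projectionbound}) is essentially the standard one and is fine; the only (shared) imprecision is that the data include the point values $w(\pm1)$, which are not defined for a general $L^\infty$ function --- but the paper's own statement and definition of $P_h^\star$ suffer from exactly the same issue, so this is not a gap attributable to you.
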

\proof We provide the proof of this lemma in the appendix; see section \ref{proofoflemmaprojection}.
\begin{Remark}
The projection $P_h^\star$ is only well defined when $k$ is even. In fact, when $k$ is odd, for example $k=1$, we can take $w_I=x\in \mathbb{P}^{1}([-1,1])$, which satisfies
\begin{align}
&\int_{-1}^1 w_I(x) \,dx=0,\\
&\widetilde{P_h}(w_I; v)=-\int_{-1}^1 w_I(x) v_x \, dx+\frac{w_I(1)+w_I(-1)}{2}(v(1)-v(-1))=0, \quad \forall v \in \mathbb{P}^1([-1,1]).
\end{align}
It means that there exists a nonzero function $w_I=P_h^\star w$, where $w\equiv0$. This implies
that $P_h^\star w$ is not unique.
\end{Remark}

\begin{Remark}
In fact, the projection $P_h^\star$ has an equivalent definition as follows,
\begin{align}
\int_{x_{\jl}}^{x_{\jr}}P_h^\star w v \, dx = \int_{x_{\jl}}^{x_{\jr}} w v \, dx, \quad \forall v \in \mathbb{P}^{k-1}(I_j),\\
\frac{1}{2}(P_h^\star w(x_{\jr}^-)+P_h^\star w(x_{\jl}^+))=\frac{1}{2}(w(x_{\jr})+w(x_{\jl})).
\end{align}
\end{Remark}
As a direct corollary of lemma \ref{lemprojection} {\rev{and the locality of the projection}}, the standard approximation theory \cite{ciarlet2002finite} implies, for a smooth function $w$,
\begin{align}\label{proest}
\|P_h^\star w(x)- w(x)\| +h^{\frac{1}{2}}\|P_h^\star w(x)- w(x)\|_{\Gamma_h} \lesssim h^{k+1}\|w\|_{k+1}.
\end{align}
We also have the following properties of the projection $P_h^\star$,
\begin{Lem}
Suppose that $u=x^{k+1}$. Let $u_{j}=P_h^\star u|_{I_j}$.  If $h_{j-1}=h_{j}=h_{j+1}=h$, then we have the following relationship:
\begin{align}
(x-h)^{k+1}-u_{j-1}(x-h)=x^{k+1}- u_j(x)=(x+h)^{k+1}-u_{j+1}(x+h), \quad \forall x \in I_j.
\end{align}
where $P_h^\star u|_{I_j}$ means that the projection of $u$ is defined on the subinterval $I_j$, and $u_{j-1}(x-h)$, $u_{j+1}(x+h)$ refer to the projection of $u$ on the element $I_{j-1}$ and $I_{j+1}$ respectively, since $x\in I_j$ implies $(x-h)\in I_{j-1}$ and $(x+h)\in I_{j+1}$.
\end{Lem}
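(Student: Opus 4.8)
The plan is to deduce the three-way identity from two structural features of $P_h^\star$ on a locally uniform mesh: linearity together with exact reproduction of $\mathbb{P}^k$, and a translation-covariance property linking the projections on adjacent cells. Throughout set $e_j(x) := x^{k+1} - u_j(x)$ on $I_j$; the claim is exactly that $e_{j-1}(x-h) = e_j(x) = e_{j+1}(x+h)$ for $x \in I_j$. I would first record two preliminary facts. Since the two functionals $w \mapsto \int_{I_j} w\,dx$ and $w \mapsto \widetilde{P_h}(w;v)_j$ are linear in $w$, the projection $P_h^\star$ is linear. Moreover, if $p \in \mathbb{P}^k(I_j)$ then $p$ trivially satisfies both relations in (\ref{shiftprojection}), so by the uniqueness asserted in Lemma \ref{lemprojection} (which requires $k$ even) we get $P_h^\star p = p$.

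The heart of the matter is the covariance identity
\begin{equation*}
P_h^\star\big[f(\cdot - h)\big]\big|_{I_j}(x) = \big(P_h^\star f\big|_{I_{j-1}}\big)(x-h), \qquad x \in I_j,
\end{equation*}
valid whenever $h_{j-1}=h_j=h$. To prove it I would set $w(x) := (P_h^\star f|_{I_{j-1}})(x-h)$, which is a polynomial of degree $\le k$ on $I_j$, and verify that $w$ satisfies the two defining relations (\ref{shiftprojection}) for the projection of $f(\cdot-h)$ on $I_j$. Both verifications amount to the single change of variables $y = x-h$, which maps $I_j$ onto $I_{j-1}$: the integral relation transfers directly, while for the $\widetilde{P_h}$ relation the uniform-mesh hypothesis is exactly what guarantees that the shift sends the endpoints $x_{\jl}, x_{\jr}$ of $I_j$ to the endpoints $x_{j-\frac{3}{2}}, x_{\jl}$ of $I_{j-1}$, so the one-sided boundary values and the shifted test function $\tilde v(y) := v(y+h)$ line up and reduce $\widetilde{P_h}(w;v)_j$ and $\widetilde{P_h}(f(\cdot-h);v)_j$ to $\widetilde{P_h}(P_h^\star f;\tilde v)_{j-1}$ and $\widetilde{P_h}(f;\tilde v)_{j-1}$, which agree by the projection property of $P_h^\star f$ on $I_{j-1}$. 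Uniqueness (Lemma \ref{lemprojection}) then forces $w = P_h^\star[f(\cdot-h)]|_{I_j}$. The mirror identity relating $I_j$ and $I_{j+1}$ via the shift $x \mapsto x+h$ is proved identically.

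With these in hand the conclusion is a short computation. Taking $f = u = x^{k+1}$ and expanding binomially,
\begin{equation*}
x^{k+1} = (x-h)^{k+1} + q(x-h), \qquad q(y) := \sum_{m=0}^{k}\binom{k+1}{m} h^{\,k+1-m}\, y^m \in \mathbb{P}^k,
\end{equation*}
linearity, polynomial reproduction, and the covariance identity give
\begin{equation*}
u_j(x) = P_h^\star\big[(\cdot - h)^{k+1}\big]\big|_{I_j}(x) + q(x-h) = u_{j-1}(x-h) + q(x-h),
\end{equation*}
so that $e_j(x) = x^{k+1} - u_{j-1}(x-h) - q(x-h) = (x-h)^{k+1} - u_{j-1}(x-h) = e_{j-1}(x-h)$. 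The identical argument with $(x+h)^{k+1}$ in place of $(x-h)^{k+1}$ yields $e_j(x) = e_{j+1}(x+h)$, completing the chain.

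I expect the only real obstacle to be making the covariance identity fully rigorous: one must track carefully how the shift acts on each one-sided limit appearing in $\widetilde{P_h}$ and on the argument of the test function, and confirm that uniform spacing aligns all endpoints before appealing to uniqueness. Everything after that — the binomial split and the cancellation of $q$ — is routine bookkeeping using nothing beyond linearity and the reproduction of $\mathbb{P}^k$.
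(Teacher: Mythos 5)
Your proof is correct and complete. The paper itself omits the argument, deferring to the ``shifting technique'' of \cite{liu2018cdg,liu2019pk}; your combination of linearity, exact reproduction of $\mathbb{P}^k$ (via the uniqueness in Lemma \ref{lemprojection} for even $k$), and the translation-covariance identity established by change of variables is precisely that technique, so your route is essentially the intended one and usefully supplies the omitted details --- in particular the verification that the uniform local spacing $h_{j-1}=h_j=h_{j+1}$ is exactly what aligns the cell endpoints and one-sided limits under the shift before uniqueness is invoked.
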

\proof The proof of this lemma is by the same arguments as
in \cite{liu2018cdg,liu2019pk}, so we omit it here.

By this lemma, we also have the following superconvergence results.
\begin{Pro}\label{superpro}
Given the index $j$, suppose that $u$ is a $(k+1)$th degree polynomial function in
$\mathbb{P}^{k+1}(I_{j-1}\bigcup I_j \bigcup I_{j+1})$.  If $h_{j-1}=h_{j}=h_{j+1}=h$, we have
\begin{align}
a_{j}(P_h^\star u,v_h)=a_j(u,v_h) \quad \forall v_h \in \mathbb{P}^{k}(I_j),
\end{align}
where $a_{j}$ is defined by (\ref{DGoperator}).
\end{Pro}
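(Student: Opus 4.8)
The plan is to exploit the linearity of $a_j(\cdot,v_h)$ in its first argument and prove the equivalent identity $a_j(\eta,v_h)=0$ for every $v_h\in\mathbb{P}^k(I_j)$, where $\eta:=P_h^\star u-u$ is the projection error. Expanding the operator (\ref{DGoperator}) with the central flux (\ref{centralflx}),
\[
a_j(\eta,v_h)=-\int_{I_j}\eta\,(v_h)_x\,dx+\{\eta\}|_{\jr}\,v_h^-|_{\jr}-\{\eta\}|_{\jl}\,v_h^+|_{\jl},
\]
so it suffices to verify three separate cancellations: the volume integral vanishes, and the two interface averages $\{\eta\}|_{\jr}$ and $\{\eta\}|_{\jl}$ are zero. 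I write $\eta_i:=\eta|_{I_i}$ for the restriction to cell $I_i$, and recall that at an interface the central average pairs the interior trace of the left cell with the exterior trace of the right cell.

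First I would dispose of the volume term. Since $v_h\in\mathbb{P}^k(I_j)$ gives $(v_h)_x\in\mathbb{P}^{k-1}(I_j)$, the $L^2$-orthogonality built into $P_h^\star$ (the first relation of its equivalent characterization, $\int_{I_j}(P_h^\star w-w)\,v\,dx=0$ for all $v\in\mathbb{P}^{k-1}(I_j)$) immediately gives $\int_{I_j}\eta\,(v_h)_x\,dx=0$. Next I would reduce the interface analysis to a single monomial. By the uniqueness established in Lemma \ref{lemprojection}, $P_h^\star$ is linear and reproduces $\mathbb{P}^k$ locally: any $q\in\mathbb{P}^k(I_j)$ trivially satisfies the defining relations (\ref{shiftprojection}), whence $P_h^\star q=q$ on $I_j$. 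Writing $u=p+c\,x^{k+1}$ with $p\in\mathbb{P}^k$, the error $\eta$ therefore equals $c$ times the projection error of $x^{k+1}$, so it is enough to treat $u=x^{k+1}$, which is exactly the hypothesis of the shift-invariance lemma proved just above.

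The core of the argument is the two interface terms, which I would settle by combining two properties. The endpoint-average property of $P_h^\star$ (the second relation of its equivalent characterization), together with the continuity of $u$, yields the cell-local identity $\eta_j(x_{\jr}^-)+\eta_j(x_{\jl}^+)=0$. The shift-invariance lemma, which asserts $\eta_{j-1}(x-h)=\eta_j(x)=\eta_{j+1}(x+h)$ for $x\in I_j$ on the uniform three-cell stencil, then lets me transport each neighbour's boundary trace onto the opposite endpoint of $I_j$: letting $x\to x_{\jl}^+$ gives $\eta_{j+1}(x_{\jr}^+)=\eta_j(x_{\jl}^+)$, and letting $x\to x_{\jr}^-$ gives $\eta_{j-1}(x_{\jl}^-)=\eta_j(x_{\jr}^-)$. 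Substituting these into the central averages,
\[
\{\eta\}|_{\jr}=\tfrac12\big(\eta_j(x_{\jr}^-)+\eta_{j+1}(x_{\jr}^+)\big)=\tfrac12\big(\eta_j(x_{\jr}^-)+\eta_j(x_{\jl}^+)\big)=0,
\]
and identically $\{\eta\}|_{\jl}=\tfrac12\big(\eta_{j-1}(x_{\jl}^-)+\eta_j(x_{\jl}^+)\big)=\tfrac12\big(\eta_j(x_{\jr}^-)+\eta_j(x_{\jl}^+)\big)=0$. With all three pieces vanishing, $a_j(\eta,v_h)=0$ for every $v_h$, which is the claim.

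The step I expect to require the most care is the trace bookkeeping in the previous paragraph: one must keep straight that the shift $x\mapsto x\pm h$ carries the left (respectively right) endpoint of $I_j$ to the correspondingly oriented endpoint of $I_{j\pm1}$, and that this clean correspondence holds only because $h_{j-1}=h_j=h_{j+1}=h$ forces $x_{\jl}+h=x_{\jr}$ and $x_{\jr}-h=x_{\jl}$. It is also essential that the central flux at each interface combines the interior trace from one side with the exterior trace from the other, so that the transported neighbour value lands on precisely the endpoint that cancels against $\eta_j$ through the projection's endpoint-average condition. Once this matching is set correctly, the two defining properties of $P_h^\star$ and the shift invariance conspire to annihilate $a_j(\eta,v_h)$, establishing the proposition.
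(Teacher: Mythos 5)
Your proof is correct and matches the paper's intended argument: the paper omits a written proof of the one-dimensional proposition (attributing it to the shift lemma), but its appendix proof of the two-dimensional analogue, Proposition \ref{2dprosuper}, uses exactly your decomposition --- reduce to $u=x^{k+1}$ by $\mathbb{P}^k$-reproduction, kill the volume term by the $\mathbb{P}^{k-1}$-orthogonality of $P_h^\star$, and cancel each interface average by combining the shift identity of Lemma 2.2 with the endpoint-average condition. Your trace bookkeeping at the two interfaces is the step the paper glosses over, and you have it right.
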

Then we can state the main theorem of this paper.
\begin{Thm}
Suppose $u_h$ is the numerical solution of the DG scheme (\ref{DGscheme}) for equation (\ref{1dlinear}) with a smooth initial condition $u(\cdot,0)\in H^{k+2}(\Omega)$, and $u$ is the exact solution of (\ref{1dlinear}), then the approximation $u_h$ satisfies the following $L^2$ error estimate:
\begin{align}\label{nonuniformest}
\|u(\cdot,T)-u_h(\cdot,T)\| \lesssim h^{k},
\end{align}
where $k$ is the degree of the piecewise polynomials in the finite element spaces $V_h$.
Furthermore, when $k$ is even and the mesh is uniform, we have the optimal error estimate:
\begin{align}\label{uniformest}
\|u(\cdot,T)-u_h(\cdot,T)\| \lesssim h^{k+1}.
\end{align}
\end{Thm}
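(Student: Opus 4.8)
The plan is to run the standard energy-based error analysis, but with two different projections tailored to the two claims. Write $e=u-u_h$ and note first that the scheme is consistent: since the exact solution $u$ is continuous, the central flux (\ref{centralflx}) reduces to $\{u\}=u$ at every interface, integration by parts turns $a_j(u,v_h)$ into $(u_x,v_h)_j$, and $u_t+u_x=0$ then gives $((u)_t,v_h)_j+a_j(u,v_h)=0$. Subtracting the scheme (\ref{DGscheme}) yields the error equation $((e)_t,v_h)_j+a_j(e,v_h)=0$ for all $v_h\in V_h$. I would then split $e=\eta-\xi$ with $\eta=u-\Pi u$ the projection error and $\xi=u_h-\Pi u\in V_h$, where $\Pi$ is the $L^2$ projection $P$ for the first estimate and the special projection $P_h^\star$ for the second. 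Summing over $j$, taking $v_h=\xi$, and using the energy identity $a(\xi,\xi)=0$ (the cancellation underlying Theorem \ref{stab}, valid for the central flux because the volume and interface contributions telescope), I obtain the master relation $\tfrac12\tfrac{d}{dt}\|\xi\|^2=(\eta_t,\xi)+a(\eta,\xi)$. Everything then reduces to estimating the right-hand side and applying Gr\"onwall together with $\|\xi(0)\|\lesssim h^{k+1}$, which follows from (\ref{initerr}) and the triangle inequality.

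For the sub-optimal estimate (\ref{nonuniformest}) on a general regular mesh I would take $\Pi=P$. Because $P$ commutes with $\partial_t$ and annihilates $V_h$, the term $(\eta_t,\xi)=(u_t-Pu_t,\xi)$ vanishes. In $a(\eta,\xi)$ the volume term $-(\eta,(\xi)_x)_j$ vanishes too since $(\xi)_x\in\mathbb{P}^{k-1}$, leaving only the interface sum, which after reindexing by periodicity becomes $-\sum_j\{\eta\}|_{\jr}[\xi]_{\jr}$. Cauchy--Schwarz together with the trace approximation bound $\|\eta\|_{\Gamma_h}\lesssim h^{k+1/2}$ and the inverse inequality (\ref{inverseineq})(ii) gives $|a(\eta,\xi)|\lesssim \|\eta\|_{\Gamma_h}\,h^{-1/2}\|\xi\|\lesssim h^{k}\|\xi\|$. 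Hence $\tfrac{d}{dt}\|\xi\|\lesssim h^{k}$, so $\|\xi(T)\|\lesssim h^{k}$, and combining with $\|\eta\|\lesssim h^{k+1}$ yields $\|e(T)\|\lesssim h^{k}$. The loss of half a power in the interface term is precisely the mechanism that obstructs optimality on general meshes.

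The heart of the theorem is the optimal estimate (\ref{uniformest}) for even $k$ on a uniform mesh, for which I would take $\Pi=P_h^\star$. Since $P_h^\star$ reproduces all moments against $\mathbb{P}^{k-1}$, the term $(\eta_t,\xi)$ no longer vanishes but is harmless: a plain Cauchy--Schwarz with $\eta_t=u_t-P_h^\star u_t$ and (\ref{proest}) gives $|(\eta_t,\xi)|\le\|\eta_t\|\,\|\xi\|\lesssim h^{k+1}\|\xi\|$. The decisive gain is in $a(\eta,\xi)=\sum_j a_j(\eta,\xi)$. On each three-cell patch $I_{j-1}\cup I_j\cup I_{j+1}$, which is locally uniform, I would write the Taylor decomposition $u=q_j+r_j$ with $q_j\in\mathbb{P}^{k+1}$ the degree-$(k+1)$ Taylor polynomial at $x_j$ and remainder satisfying $\|r_j\|_{\infty}\lesssim h^{k+2}$ on the patch. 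By linearity, $a_j(\eta,\xi)=a_j(q_j-P_h^\star q_j,\xi)+a_j(r_j-P_h^\star r_j,\xi)$; the polynomial part vanishes identically by Proposition \ref{superpro}, so only the remainder survives. There the moment property again kills the volume term, and the $L^\infty$-stability of $P_h^\star$ (Lemma \ref{lemprojection}) makes each interface factor $O(h^{k+2})$, giving $|a_j(r_j-P_h^\star r_j,\xi)|\lesssim h^{k+2}\bigl(|\xi^-|_{\jr}|+|\xi^+|_{\jl}|\bigr)$. Summing over $j$ with Cauchy--Schwarz, $\sum_j(|\xi^-|_{\jr}|+|\xi^+|_{\jl}|)\lesssim N^{1/2}\|\xi\|_{\Gamma_h}\lesssim h^{-1}\|\xi\|$, whence $|a(\eta,\xi)|\lesssim h^{k+1}\|\xi\|$.

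Combining the two contributions gives $\tfrac{d}{dt}\|\xi\|\lesssim h^{k+1}$, so $\|\xi(T)\|\lesssim h^{k+1}$ by Gr\"onwall and the initial bound, and the triangle inequality with $\|\eta\|\lesssim h^{k+1}$ finishes (\ref{uniformest}); the regularity $u(\cdot,0)\in H^{k+2}$ is exactly what is needed to control $\|u_t\|_{k+1}=\|u_x\|_{k+1}$ and the Taylor remainder. I expect the main obstacle to be this second estimate: one must apply Proposition \ref{superpro} patch-by-patch using a \emph{consistent} local decomposition $u=q_j+r_j$ for $P_h^\star u$ on all three cells entering $a_j$, and then verify that summing the $O(h^{k+2})$ interface residuals against the merely $O(h^{-1/2})$-bounded jumps of $\xi$ still nets a full power $h^{k+1}$ rather than $h^{k}$. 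This is the step where both the local uniformity of the mesh and the very existence of $P_h^\star$ (hence the parity restriction to even $k$) are indispensable.
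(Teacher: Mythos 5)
Your proposal is correct and follows essentially the same route as the paper: the same $\xi=u_h-\Pi u$, $\eta=u-\Pi u$ energy argument with the $L^2$ projection for the sub-optimal bound, and the special projection $P_h^\star$ combined with the degree-$(k+1)$ Taylor decomposition on locally uniform three-cell patches and Proposition \ref{superpro} for the optimal bound, finished by Gr\"onwall. The only (harmless) deviation is in bounding the Taylor remainder: you use a pointwise $O(h^{k+2})$ bound and sum the interface residuals with an $N^{1/2}$ Cauchy--Schwarz factor, whereas the paper uses the Bramble--Hilbert bound $\|r_u^j\|_{L^\infty(D_j)}\lesssim h^{k+3/2}|u|_{k+2,D_j}$ and recovers the missing half power by summing the local seminorms in $\ell^2$; both yield the same $h^{k+1}\|\xi\|$ contribution.
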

\begin{proof}
Obviously, the exact solution $u$ of (\ref{1dlinear}) also satisfies
\begin{align}\label{exactuscheme}
(u_t,v_h)+a_{j}(u,v_h)=0, \quad \forall v_h\in V_h.
\end{align}
Subtracting (\ref{DGscheme}) from (\ref{exactuscheme}), we obtain the error equation
\begin{align}\label{errorequ}
((u-u_h)_t,v_h)_j+a_j(u-u_h,v_h)=0, \quad \forall v_h \in V_h.
\end{align}
We denote
\begin{align}
\xi=u_h-P^{\star} u; \quad \eta = u- P^{\star}u,
\end{align}
where $P^\star$ is some projection. From the error equation (\ref{errorequ}), and taking $v_h=\xi$, we have
\begin{align}\label{xietaestimate}
(\xi_t,\xi)_j+a_j(\xi,\xi)=(\eta_t,\xi)_j+a_j(\eta,\xi).
\end{align}
For the nonuniform mesh case, the sub-optimal error estimate can be easily obtained by using the
standard $L^2$ projection $P$.  We take $P^\star$ as the standard $L^2$ projection $P$, then we have,
\begin{align}\label{l2error}
\|u-P u\|+h^{\frac{1}{2}}\|u-P u\|_{\Gamma_h} \lesssim h^{k+1}\|u\|_{k+1}.
\end{align}
For the left-hand side of (\ref{xietaestimate}), we can use {\rev{(\ref{energy-conserving})}} to obtain
\begin{align}
\frac{1}{2}\frac{d}{dt}\|\xi\|^2&=-\sum_{j=1}^{N}\{\eta\}_{\jr}[\xi]_{\jr}\nonumber\\
&\lesssim h^{k}\|\xi\|\|u\|_{k+1},
\end{align}
where the last inequality is from (\ref{l2error}) and (ii) of (\ref{inverseineq}). Thus, by using Gronwall's inequality and (\ref{initerr}), we have,
\begin{align}
\|\xi\|\lesssim h^{k}\|u\|_{k+1}.
\end{align}
The triangle inequality implies our designed results for the general non-uniform mesh case.

For the case of uniform meshes, when $k$ is even, we take $P^{\star}$ as $P_h^\star$ which is
defined in (\ref{shiftprojection}).  Let $u_I^j$ be the Taylor expansion polynomial of order
$k+1$ of $u$ over $D_j=(x_{j-\frac{3}{2}},x_{j+\frac{3}{2}})$, i.e., $u_I^j(x)=\sum_{i=0}^{k+1}u^{(i)}(x_j)(x-x_j)^i$, $x\in D_j$. Let $r_u^j$ denote the remainder term, i.e., $r_u^j=u-u_I^j$. Recalling the Bramble-Hilbert lemma in \cite{ciarlet2002finite}, we have
\begin{align}\label{estimateremainder}
\|r_u^j\|_{L^\infty(D_j)} \lesssim h^{k+\frac{3}{2}}|u|_{k+2,D_j}.
\end{align}
Thus, using Proposition \ref{superpro}, we have
\begin{align*}
a_{j}(\eta,\xi)&=a_j(u_I^j-P_h^\star u_I^j,\xi)+a_j(r_u^j-P_h^\star r_u^j,\xi) \\
&=a_j(r_u^j-P_h^\star r_u^j,\xi).
\end{align*}
By using the property of the projection (\ref{projectionbound}) and (\ref{estimateremainder}), and the inverse inequality in (\ref{inverseineq}) for $\xi$, we have
\begin{align}
\sum_{j}a_j(\eta,\xi) \lesssim h^{2k+2} \|u\|_{k+2} + \|\xi\|^2.
\end{align}
Therefore, form (\ref{xietaestimate}), (\ref{proest}) and the stability result (\ref{stab}), we have
\begin{align}
\frac{1}{2}\frac{d}{dt}\|\xi\|^2 \lesssim h^{2k+2}\|u\|_{k+2}+\|\xi\|^2.
\end{align}
This together with the approximation results (\ref{proest}) and the initial datum
(\ref{initerr}), implies the desired error estimate (\ref{uniformest}).
\end{proof}

We summarize the theoretical findings and numerical findings in Table \ref{tab1d}.

\begin{table}[!ht]\centering
\caption{Summarization of the $L^2$ error accuracy for the 1D case.}
	\label{tab1d}
\begin{tabular}{|c|c|c|c|}
  \hline
  % after \\: \hline or \cline{col1-col2} \cline{col3-col4} ...
   & mesh & $k$ is odd & $k$ is even \\ \hline
  \multirow{2}*{Numerically} & uniform & \multirow{2}*{$k$th} & ($k+1$)th \\ \cline{2-2} \cline{4-4}
   & non-uniform &  & $k$th \\ \hline
  \multirow{2}*{Theoretically} & uniform & \multirow{2}*{$k$th} & ($k+1$)th \\ \cline{2-2} \cline{4-4}
   &non-uniform &  & $k$th \\
  \hline
\end{tabular}
\end{table}

From Table \ref{tab1d},
we can see that our theoretical findings are sharp and consistent with the numerical results. We emphasize
that when $k$ is even, in order to produce the sub-optimal accuracy, we have designed a special regular
mesh sequence which is motivated by \cite{JGuzmanJSC2009}, see section \ref{Num}.

\section{Multi-dimensional problems}
\label{twodimension}
\setcounter{equation}{0}
\setcounter{figure}{0}
\setcounter{table}{0}

In this section, we consider the semidiscrete DG method with central fluxes
for multidimensional linear hyperbolic equations. Without loss of generality, we only study the two
dimensional problem; all the arguments we present in our analysis depends on the tensor product
structure of the mesh and the finite element space and can be easily extended to the more general
cases $d>2$. Hence, we consider the following two-dimensional problem
\begin{equation}
\label{2dlinear}
\left\{
\begin{aligned}
&u_{t}+u_{x}+u_y=0, \ (x,y,t) \in \Omega\times(0,T],\\
&u(x,y,0)=u_{0}(x,y), \ (x,y) \in \Omega.
\end{aligned}
\right.
\end{equation}
again with periodic boundary conditions. Without loss of generality, we assume $\Omega=[0,1]^2$. We
use the regular Cartesian mesh, $\left\{K_{i,j}=I_i\times J_j=[x_{\il},x_{\ir}]\times[y_{\jl},y_{\jr}]\right\}$, $i=1,\ldots,N_x$, $j=1,\ldots, N_y$. We denote $h_x^i=x_{\ir}-x_{\il}$, $h_y^j=y_{\jr}-y_{\jl}$ and $h =\max_{i,j}(h_x^i,h_y^j)$. Let $W_h:=\{v\in L^2(\Omega): v|_{K_{i,j}} \in \mathbb{Q}^k(K_{i,j}), \, \forall i,j\}$, where $\mathbb{Q}^k(K_{i,j})$ denotes the space of tensor-product polynomials of degrees at most $k$ in each variable defined on $K_{i,j}$.%$V_h:=\{v\in L^2(\Omega): v|_{K_{i,j}} \in \mathbb{P}^k(K_{i,j}), \, \forall i,j\}$, where $\mathbb{P}^k(K_{i,j})$ denotes the space of polynomials of degrees at most $k$ defined on $K_{i,j}$;

The semidiscrete DG scheme with central fluxes is as follows.  We seek $u_h\in W_h$, such that for all test functions $v \in W_h$, and all $i,j$,
\begin{align}
\int_{K_{i,j}}(u_h)_t v\, dx dy =& \int_{K_{i,j}} (u_h v_x +u_h v_y) \,dx dy \nonumber \\
&- \int_{y_{\jl}}^{y_{\jr}} \left(\hat{u}_h(x_{\ir},y) v(x_{\ir}^-,y)-\hat{u}_h(x_{\il},y) v(x_{\il}^+,y)\right)\, dy\nonumber \\
&-\int_{x_{\il}}^{x_{\ir}} \left(\tilde{u}_h(x,y_{\jr}) v(x,y_{\jr}^-)-\tilde{u}_h(x,y_{\jl}) v(x,y_{\jl}^+)\right)\, dx \label{2dDGscheme}\\
&=:b_{i,j}(u_h,v)\label{2dDGop},
\end{align}
where
\begin{align}
\hat{u}_h(x_{\ir},y)=\frac{u_h(x_{\ir}^+,y)+u_h(x_{\ir}^-,y)}{2};\quad \tilde{u}_h(x,y_{\jr})=\frac{u_h(x,y_{\jr}^+)+u_h(x,y_{\jr}^-)}{2}.
\end{align}
For the initial data, we take $u_h(0)=Pu_0$, where $P$ is the $L^2$ projection into $W_h$, and we have {\rev\cite{ciarlet2002finite}}
\begin{align}\label{initerr2d}
\|u_0-Pu_0\| \lesssim h^{k+1}\|u_0\|_{k+1}.
\end{align}
{\rev{
We also have
\begin{align}
\label{energyconserving2d}
\sum_{i=1}^{N_x} \sum_{j=1}^{N_y} b_{i,j}(u_h,u_h) =0, \quad \forall u_h \in W_h.
\end{align}
}}
Thus we have the following energy conservative property
\begin{Pro}
\label{2dstab}
The numerical solution of (\ref{2dDGscheme}) satisfies
\begin{align}
\frac{1}{2}\frac{d}{dt}\|u_h\|^2=0 .
\end{align}
\end{Pro}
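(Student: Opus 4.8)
The plan is to reproduce the standard energy argument used for Theorem \ref{stab} in one dimension, now exploiting the tensor-product structure of the Cartesian mesh. First I would take the test function $v = u_h$ in the scheme (\ref{2dDGscheme}) and sum over all cells $K_{i,j}$. The left-hand side collapses to $\frac{1}{2}\frac{d}{dt}\|u_h\|^2$, since $\int_{K_{i,j}} (u_h)_t u_h = \frac{1}{2}\frac{d}{dt}\int_{K_{i,j}} u_h^2$. Thus the whole statement reduces to proving the purely algebraic identity $\sum_{i,j} b_{i,j}(u_h, u_h) = 0$.

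To establish this, I would split $b_{i,j}(u_h, u_h)$ into its $x$-directional and $y$-directional parts, which are structurally identical, so it suffices to treat the $x$-part and invoke symmetry for the $y$-part. For the $x$-part I rewrite the volume term using $u_h (u_h)_x = \frac{1}{2}(u_h^2)_x$ and apply the fundamental theorem of calculus in $x$, converting $\int_{K_{i,j}} u_h (u_h)_x\,dxdy$ into the line integral $\frac{1}{2}\int_{y_{\jl}}^{y_{\jr}} \big[(u_h(x_{\ir}^-,y))^2 - (u_h(x_{\il}^+,y))^2\big]\,dy$. Combining this with the corresponding flux terms in (\ref{2dDGscheme}) leaves a sum of $y$-line integrals, one over each vertical interface $x = x_{\ir}$.

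The crux is then to show that each such interface contributes zero. At a fixed interface $x_{\ir}$, the cell $K_{i,j}$ on the left supplies $\frac{1}{2}(u^-)^2 - \hat{u}_h u^-$, while its right neighbor $K_{i+1,j}$ supplies $-\frac{1}{2}(u^+)^2 + \hat{u}_h u^+$, where $u^\pm$ denote the two traces at $x_{\ir}$. Substituting the central flux $\hat{u}_h = \frac{1}{2}(u^+ + u^-)$ and expanding gives $\frac{1}{2}[(u^-)^2 - (u^+)^2] + \frac{1}{2}(u^+ + u^-)(u^+ - u^-) = 0$ pointwise in $y$, so the integrand vanishes. Because the boundary conditions are periodic, every vertical interface is shared by exactly two cells and no genuine domain-boundary terms survive; the same cancellation holds on the horizontal interfaces for the $y$-part. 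Summing over all interfaces yields $\sum_{i,j} b_{i,j}(u_h,u_h) = 0$, which is the claim.

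The main obstacle I anticipate is organizational rather than analytical: correctly pairing the right-boundary contribution of each cell with the left-boundary contribution of its neighbor across the two-dimensional grid, and tracking the signs so that the central-flux identity produces exact cancellation. The cancellation itself is a one-line computation, but the discrete summation-by-parts bookkeeping over the tensor grid, together with the careful use of periodicity to ensure no domain-boundary terms remain, is where an error could most easily creep in.
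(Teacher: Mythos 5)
Your proof is correct and is exactly the standard energy argument that the paper relies on: the paper itself omits the proof of Proposition \ref{2dstab}, treating it as the routine two-dimensional analogue of Theorem \ref{stab} (cf.\ \cite{MSW}), and your computation --- taking $v=u_h$, integrating $u_h(u_h)_x=\frac12(u_h^2)_x$ by parts in each direction, and checking that the central flux makes each interface contribution $\frac12\left[(u^-)^2-(u^+)^2\right]+\frac12(u^++u^-)(u^+-u^-)$ vanish, with periodicity closing the sum --- is precisely that argument. No gaps.
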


\subsection{A priori error estimates}

Let us now state our main result as a theorem, whose proof will be provided in the next subsection.
\begin{Thm}\label{thmmain2d}
Suppose $u_h$ is the numerical solution of the DG scheme (\ref{2dDGscheme}) for equation (\ref{2dlinear}) with a smooth initial condition $u(x,y,0)\in H^{k+2}(\Omega)$, and $u$ is the exact solution of (\ref{2dlinear}), then the approximation $u_h$ satisfies the following $L^2$ error estimate:
\begin{align}\label{nonuniformest2}
\|u(x,y,T)-u_h(x,y,T)\| \lesssim h^{k},
\end{align}
where $k$ is the degree of the piecewise tensor-product polynomials in the finite element spaces $W_h$.
Furthermore, when $k$ is even and the mesh is uniform, we have the optimal error estimate,
\begin{align}\label{uniformest2}
\|u(x,y,T)-u_h(x,y,T)\| \lesssim h^{k+1}.
\end{align}
\end{Thm}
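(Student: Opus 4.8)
The plan is to follow the one-dimensional argument verbatim, exploiting the tensor-product structure of both the mesh and the space $W_h$. First I would note that the exact solution satisfies the same weak form, so subtracting gives the error equation $((u-u_h)_t,v)_{i,j}+b_{i,j}(u-u_h,v)=0$ for all $v\in W_h$. I would then split the error as $u-u_h=\eta-\xi$ with $\eta=u-P^\star u$ and $\xi=u_h-P^\star u$, take $v=\xi$, and sum over all $i,j$. The left-hand side yields $\tfrac12\frac{d}{dt}\|\xi\|^2$ by the energy-conservation Proposition~\ref{2dstab} applied to $\xi$ (the volume and numerical-flux terms telescope and cancel exactly as in the one-dimensional stability proof), so that only the right-hand side $\sum_{i,j}b_{i,j}(\eta,\xi)$ must be controlled.

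For the sub-optimal estimate \eqref{nonuniformest2} on a general regular mesh I would take $P^\star$ to be the tensor-product $L^2$ projection $P$ into $W_h$, for which \eqref{initerr2d} and the analogous trace estimate $h^{1/2}\|u-Pu\|_{\Gamma_h}\lesssim h^{k+1}\|u\|_{k+1}$ hold. The operator $b_{i,j}(\eta,\xi)$ reduces, after integration by parts and use of the orthogonality of $P$ against polynomials, to a sum of jump-average boundary terms of $\eta$ against jumps of $\xi$ along the cell edges in both the $x$ and $y$ directions. Bounding these by Cauchy-Schwarz, the trace estimate for $\eta$, and the trace inverse inequality for $\xi$ gives $\sum_{i,j}b_{i,j}(\eta,\xi)\lesssim h^{k}\|\xi\|\,\|u\|_{k+1}$; Gronwall's inequality together with \eqref{initerr2d} then yields $\|\xi\|\lesssim h^k$, and the triangle inequality completes \eqref{nonuniformest2}.

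For the optimal estimate \eqref{uniformest2} on a uniform mesh with $k$ even I would take $P^\star$ to be the tensor-product shifting projection $P_h^\star\otimes P_h^\star$, built from the one-dimensional operator of \eqref{shiftprojection} in each coordinate. The central device is a two-dimensional analogue of Proposition~\ref{superpro}: for $u$ a tensor-product polynomial of degree $k+1$ in each variable on the $3\times3$ patch of cells surrounding $K_{i,j}$, one should have $b_{i,j}(P_h^\star u,v)=b_{i,j}(u,v)$ for all $v\in\mathbb{Q}^k(K_{i,j})$. I would establish this by freezing one variable and applying the one-dimensional superconvergence property line by line, using that the bilinear form $b_{i,j}$ separates into an $x$-part and a $y$-part acting on the tensor-product structure. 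Granting this, I would localize $u$ by its tensor Taylor polynomial $u_I$ of degree $k+1$ on the patch $D_{i,j}$, write $\eta=(u_I-P_h^\star u_I)+(r_u-P_h^\star r_u)$, kill the first summand by the superconvergence identity, and bound the remainder $r_u$ through the Bramble-Hilbert estimate $\|r_u\|_{L^\infty(D_{i,j})}\lesssim h^{k+3/2}|u|_{k+2,D_{i,j}}$ together with the $L^\infty$-boundedness \eqref{projectionbound} of $P_h^\star$ and the inverse inequality for $\xi$, obtaining $\sum_{i,j}b_{i,j}(\eta,\xi)\lesssim h^{2k+2}\|u\|_{k+2}^2+\|\xi\|^2$. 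Gronwall and \eqref{initerr2d} then deliver \eqref{uniformest2}.

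The main obstacle I anticipate is the rigorous verification of the two-dimensional superconvergence identity and the careful bookkeeping of the boundary terms in $b_{i,j}$. Because $b_{i,j}$ contains edge integrals in both directions, one must check that the one-dimensional cancellation mechanism of $P_h^\star$ survives under the tensor product and that the averaging in the central flux interacts correctly with the projection in the transverse variable; this is where the even-degree restriction and the uniformity of the mesh are genuinely used. Once the patch identity is secured, the remaining steps are direct tensor-product analogues of the one-dimensional estimates and require no new ideas.
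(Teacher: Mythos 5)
Your proposal is correct and follows essentially the same route as the paper: the sub-optimal bound via the $L^2$ projection and trace/inverse inequalities, and the optimal bound on uniform meshes via the tensor-product shifting projection (the paper's $\Pi_h^\star$ defined by (\ref{2dprojection}) is exactly the tensor product of the one-dimensional $P_h^\star$ you propose), a patchwise superconvergence identity for the bilinear form $b_{i,j}$, a local Taylor expansion, Bramble--Hilbert for the remainder, and Gronwall. The only notable difference is that the paper localizes with the \emph{total-degree} Taylor polynomial $Tu\in\mathbb{P}^{k+1}(D_{i,j})$ on a cross-shaped five-cell patch, so the superconvergence identity (Proposition \ref{2dprosuper}) need only be verified for the two single-variable monomials $x^{k+1}$ and $y^{k+1}$ --- a cheaper route than your $\mathbb{Q}^{k+1}$ version on the $3\times 3$ patch, which would additionally require handling mixed monomials such as $x^{k+1}y^{k+1}$ (also note that in two dimensions the $L^\infty$ remainder bound scales as $h^{k+1}|u|_{k+2,D_{i,j}}$ rather than $h^{k+3/2}$, which still suffices).
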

\begin{Remark}
We note that the finite element space $V_h:=\{v\in L^2(\Omega): v|_{K_{i,j}} \in \mathbb{P}^k(K_{i,j}), \, \forall i,j\}$, where $\mathbb{P}^k(K_{i,j})$ denotes the space of polynomials of degrees at most $k$ defined on $K_{i,j}$, can also be taken as the
approximation space. But it only has the sub-optimal accuracy of order $k$ in the numerical examples, see section \ref{Num}.
Thus, here we only consider the tensor product space.
\end{Remark}
By the same arguments as in the one dimensional problem, we also have the error equation
\begin{align}
\label{errorequ2d}
\int_{K_{i,j}}(u-u_h)_t v \,dx dy -b_{i,j}(u-u_h,v)=0, \quad \forall v \in W_h, \, \forall i,j.
\end{align}
\subsection{Proof of the error estimates}
We divide the proof of Theorem \ref{thmmain2d} into several steps. First,
for non-uniform meshes, the proof of the sub-optimal error estimate is straightforward.
We just need to use the standard $L^2$ projection and follow the standard error estimates of DG methods
which is the same as in the one dimensional case. Thus next we only consider the uniform mesh case. In
order to prove the optimal error estimate when $k$ is even, we need to construct the special local
projection $\Pi_h^\star$. In addition, the optimal approximation properties of $\Pi_h^\star$ are
derived. The superconvergence results of the special projections would be given in the
subsection \ref{projectionpro}. Finally, we finish the proof of Theorem \ref{thmmain2d} in
subsection \ref{finishproof}.

\subsubsection{The special projection $\Pi_h^\star$}
\label{specialprojection2d}
Since our finite element space consists of piecewise $\mathbb{Q}^k$ polynomials, we use the tensor product
technique to construct the 2D projection. We define $\Pi_h^\star$ as the following projection into $W_h$. For each $K_{i,j}$,
\begin{subequations}\label{2dprojection}
\begin{equation}\label{2dph1}
\int_{K_{i,j}} \Pi_h^\star w(x,y) v(x,y)\, dxdy= \int_{K_{i,j}} w(x,y) v(x,y) \, dx dy,\quad \forall v\in \mathbb{Q}^{k-1}(K_{i,j}).
\end{equation}
\begin{equation}\label{2dph2}
\resizebox{\textwidth}{!}{$
\int_{I_i} \frac{\Pi_h^\star w(x,y_{\jr}^-)+\Pi_h^\star w(x,y_{\jl}^+)}{2} \varphi(x) \,dx=
\int_{I_i} \frac{ w(x,y_{\jr}^-)+ w(x,y_{\jl}^+)}{2} \varphi(x) \, dx, \quad \forall \varphi(x)\in \mathbb{P}^{k-1}(I_i)$}
\end{equation}
\begin{equation}\label{2dph3}
\resizebox{\textwidth}{!}{$
\int_{J_j} \frac{\Pi_h^\star w(x_{\ir}^-,y)+\Pi_h^\star w(x_{\il}^+,y)}{2} \varphi(y) \, dy =
\int_{J_j} \frac{w(x_{\ir}^-,y)+ w(x_{\il}^+,y)}{2} \varphi(y) \, dy, \quad \forall \varphi(y) \in \mathbb{P}^{k-1}(J_j)
$}
\end{equation}
\begin{align}\label{2dph4}
&\frac{1}{4}\left(\Pi_h^\star w(x_{\ir}^-,y_{\jr}^-)+\Pi_h^\star w(x_{\ir}^-,y_{\jl}^+)+\Pi_h^\star w(x_{\il}^+,y_{\jr}^-)+\Pi_h^\star w(x_{\il}^+,y_{\jl}^+)\right) \nonumber \\
&=\frac{1}{4}\left(w(x_{\ir}^-,y_{\jr}^-)+w(x_{\ir}^-,y_{\jl}^+)+\ w(x_{\il}^+,y_{\jr}^-)+ w(x_{\il}^+,y_{\jl}^+)\right).
\end{align}
\end{subequations}
Again, since the projection is local, we only consider the projection defined on the reference
cell $[-1,1]\times[-1,1]$. We establish the existence and uniqueness of the projection when $k$
is even in the following lemma
\begin{Lem}\label{2dprobound}
When $k$ is even, the projection $\Pi_h^\star$ defined by (\ref{2dprojection}) on the cell $[-1,1]\times[-1,1]$ exists and is unique for any $L^\infty$ function $w$, and the projection is bounded in the $L^\infty$ norm, i.e.
\begin{align}\label{2dprojectionbound}
\|\Pi_h^\star w\|_{\infty} \leq C(k) \|w\|_{\infty},
\end{align}
where $C(k)$ is a constant that only depends on $k$ but is independent of $w$.
\end{Lem}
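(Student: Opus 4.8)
The plan is to reduce everything to the one-dimensional result already established in Lemma \ref{lemprojection}, exploiting the tensor-product structure of both $\mathbb{Q}^k$ and of the defining conditions (\ref{2dph1})--(\ref{2dph4}). The key observation is that the one-dimensional projection $P_h^\star$ is, by the equivalent characterization given in the remark following Lemma \ref{lemprojection}, determined by the $k$ moment functionals $p\mapsto\int_{-1}^1 p\,\varphi\,dx$ with $\varphi\in\mathbb{P}^{k-1}$ together with the single edge-average functional $p\mapsto\frac12\big(p(1^-)+p(-1^+)\big)$. For even $k$ these $k+1$ functionals form a basis of $(\mathbb{P}^k)^\ast$; indeed, that is exactly what the existence-and-uniqueness assertion of Lemma \ref{lemprojection} says, and it is precisely here that the evenness of $k$ enters (for odd $k$ they are dependent, as the remark's counterexample shows). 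Denote this family by $\lambda_1,\dots,\lambda_{k+1}$.

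First I would identify the $(k+1)^2$ conditions defining $\Pi_h^\star$ with the tensor products $\lambda_\alpha^x\otimes\lambda_\beta^y$, $\alpha,\beta\in\{1,\dots,k+1\}$, where the superscript records the variable on which the functional acts. Condition (\ref{2dph1}) supplies the moment-times-moment block (the $k^2$ products with $\alpha,\beta\le k$, since $\mathbb{Q}^{k-1}=\mathbb{P}^{k-1}_x\otimes\mathbb{P}^{k-1}_y$); condition (\ref{2dph2}) supplies the $k$ products $\lambda_\alpha^x\otimes\lambda_{k+1}^y$ (an $x$-moment paired with a $y$ edge-average); condition (\ref{2dph3}) supplies the $k$ products $\lambda_{k+1}^x\otimes\lambda_\beta^y$; and condition (\ref{2dph4}) is precisely $\lambda_{k+1}^x\otimes\lambda_{k+1}^y$. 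The count is $k^2+k+k+1=(k+1)^2=\dim\mathbb{Q}^k$, so the linear system is square. Since $\{\lambda_\alpha^x\}$ and $\{\lambda_\beta^y\}$ are bases of $(\mathbb{P}^k_x)^\ast$ and $(\mathbb{P}^k_y)^\ast$ respectively, their tensor products form a basis of $(\mathbb{Q}^k)^\ast=(\mathbb{P}^k_x\otimes\mathbb{P}^k_y)^\ast$. A dual basis is unisolvent on $\mathbb{Q}^k$, so the prescription (\ref{2dprojection}) determines $\Pi_h^\star w\in\mathbb{Q}^k$ uniquely, yielding both existence and uniqueness in one stroke.

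For the $L^\infty$ bound (\ref{2dprojectionbound}) I would exhibit the projection explicitly as the composition $\Pi_h^\star=P_x\circ P_y$, where $P_x,P_y$ denote the one-dimensional projection $P_h^\star$ applied in the $x$- and $y$-variable respectively. One checks that $P_x\circ P_y$ satisfies (\ref{2dph1})--(\ref{2dph4}): the two operations commute with the $y$- and $x$-evaluations and integrations, so each two-dimensional condition reduces, after peeling off one variable via the corresponding one-dimensional moment or edge-average identity, to the matching property in the remaining variable. By the uniqueness just established this composition must equal $\Pi_h^\star$. Iterating the one-dimensional bound (\ref{projectionbound}) then gives $\|\Pi_h^\star w\|_\infty=\|P_x P_y w\|_\infty\le C(k)\|P_y w\|_\infty\le C(k)^2\|w\|_\infty$, which is the claimed estimate with a (possibly enlarged) $k$-dependent constant.

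The main obstacle is the verification that $P_x\circ P_y$ genuinely realizes all four conditions: this requires commuting $P_x$ with $y$-line evaluation and integration, and re-reading the edge-average conditions (\ref{2dph2})--(\ref{2dph4}) through the one-dimensional identities satisfied by $P_x$ and $P_y$. One must also check that $P_y w$ is regular enough in $x$ for $P_x$ to act, which holds because the coefficients of the polynomial $P_y w(x,\cdot)$ depend on $x$ only through integrals and edge values of $w$ and hence stay in $L^\infty$ in $x$. Everything else is the routine tensor-product bookkeeping recorded above.
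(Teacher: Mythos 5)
Your proof is correct, but it takes a genuinely different route from the paper's. The paper proves uniqueness by a direct computation: assuming $w\equiv 0$, it expands the putative kernel element in Legendre products orthogonal to $\mathbb{Q}^{k-1}$, namely $\sum_m\alpha_{k,m}L_k(x)L_m(y)+\sum_m\alpha_{m,k}L_m(x)L_k(y)+\alpha_{k,k}L_k(x)L_k(y)$, and then kills the three blocks of coefficients in turn using (\ref{2dph2}), (\ref{2dph3}) and (\ref{2dph4}) together with $L_k(1)+L_k(-1)=2$ for even $k$; the $L^\infty$ bound is then obtained by citation of the analogous argument in \cite{liu2019pk}. You instead observe that the $(k+1)^2$ defining conditions are exactly the tensor products $\lambda_\alpha^x\otimes\lambda_\beta^y$ of the $k+1$ one-dimensional functionals, so that unisolvence follows from the purely algebraic fact that tensor products of dual bases form a dual basis of $(\mathbb{P}^k_x\otimes\mathbb{P}^k_y)^\ast$, with evenness of $k$ entering only through the already-proved Lemma \ref{lemprojection}; and you get the bound from the factorization $\Pi_h^\star=P_x\circ P_y$, which makes the constant $C(k)^2$ explicit and the proof self-contained rather than a reference. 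Your version generalizes verbatim to $d>2$ with constant $C(k)^d$, which is in the spirit of the paper's remark that the analysis extends to higher dimensions; the paper's version is more concrete and avoids having to verify that the composition satisfies all four conditions. One small caveat: you lean on the equivalent (moment-plus-edge-average) characterization of $P_h^\star$ stated without proof in the remark after Lemma \ref{lemprojection}; to be fully rigorous you should either justify that equivalence or note directly that the one-dimensional functionals are unisolvent because $p\perp\mathbb{P}^{k-1}([-1,1])$ forces $p=cL_k$ and the edge average of $cL_k$ equals $c$ when $k$ is even --- the same one-line computation the paper uses in its appendix.
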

\proof The proof of this lemma is given in the Appendix; see section \ref{proofoflem2dpro}.

Since the projection is a $k$-th degree polynomial preserving local projection, standard approximation theory \cite{ciarlet2002finite} implies, for a smooth function $w$,
\begin{align}\label{errorofpro2d}
\|w-\Pi_h^\star w\|_{L^2(K_{i,j})} \lesssim h^{k+1}\|w\|_{k+1,K_{i,j}}.
\end{align}
For the two dimensional space, for any $\omega_h\in W_h$, the following inequalities hold,
\begin{align}\label{inverseinequality}
\|\partial_x \omega_h\|\lesssim h^{-1} \|\omega_h\|, \quad \|\omega_h\|_{L^2(\partial{K_{i,j}})}\lesssim h^{-1/2} \|\omega_h\|, \quad \|\omega_h\|_{\infty} \lesssim h^{-1} \|\omega_h\|,
\end{align}
where $\partial{K_{i,j}}$ is the boundary of cell $K_{i,j}$.
\begin{Remark}
By similar arguments as in the one dimensional problem, we note that the projection $\Pi_h^\star$ is not well defined when $k$ is odd.
\end{Remark}

\subsubsection{Properties of the projection $\Pi_h^\star$}
\label{projectionpro}
{\rev{By the similar arguments in the one dimensional case}}, we have the following lemma:
\begin{Lem}
\label{projectionlem}
Assume that $u=x^{k+1}$ or $y^{k+1}$. Let $u_{i,j}=\Pi_h^\star u|_{K_{i,j}}$. If $h_x^{i-1}=h_x^{i}=h_x^{i+1}=h_x$ and $h_y^{j-1}=h_y^{j}=h_y^{j+1}=h_y$, then $\forall (x,y)\in K_{i,j}$, we have following relationship:
\begin{align}\label{relapro}
&u(x-h_x,y)-u_{i-1,j}(x-h_x,y)=u(x,y)-u_{i,j}(x,y)=u(x+h_x)-u_{i+1,j}(x+h_x,y) \nonumber \\
&=u(x,y+h_y)-u_{i,j+1}(x,y+h_y)=u(x,y-h_y)-u_{i,j-1}(x,y-h_y).
\end{align}
\end{Lem}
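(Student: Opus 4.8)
The plan is to exploit the tensor-product construction of $\Pi_h^\star$ and reduce everything to the one-dimensional lemma for $P_h^\star$ established in Section \ref{sec2}. The crucial observation is that when $u=x^{k+1}$ depends on $x$ only, the two-dimensional projection collapses onto the one-dimensional shifting projection acting in the $x$ variable, so that $\Pi_h^\star u$ is independent of $y$ and agrees columnwise with $P_h^\star(x^{k+1})$.

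First I would prove this reduction: writing $g(x)=x^{k+1}$, I would take the candidate $\widetilde{w}(x,y):=(P_h^\star g)(x)$, which lies in $\mathbb{Q}^k(K_{i,j})$ since it has degree $\le k$ in $x$ and is constant in $y$, and verify that it satisfies each of the defining conditions (\ref{2dph1})--(\ref{2dph4}). For (\ref{2dph1}) I would decompose any $v\in\mathbb{Q}^{k-1}(K_{i,j})$ into products $\phi(x)\psi(y)$ with $\phi\in\mathbb{P}^{k-1}(I_i)$; the double integral factorizes and the $x$-factor vanishes by the interior moment condition satisfied by $P_h^\star$. Because $\widetilde{w}$ is constant in $y$, condition (\ref{2dph2}) also collapses to the same interior moment condition $\int_{I_i}(P_h^\star g-g)\varphi\,dx=0$, while both the edge condition (\ref{2dph3}) and the corner condition (\ref{2dph4}) collapse to the scalar edge-average identity $\tfrac12\big(P_h^\star g(x_{\ir}^-)+P_h^\star g(x_{\il}^+)\big)=\tfrac12\big(g(x_{\ir})+g(x_{\il})\big)$. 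Both scalar identities hold by the equivalent characterization of $P_h^\star$ recalled in Section \ref{sec2}. The uniqueness asserted in Lemma \ref{2dprobound} then forces $\Pi_h^\star(x^{k+1})=(P_h^\star x^{k+1})$, so that $u_{i,j}(x,y)=:u_i(x)$ does not depend on $j$.

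With the reduction in hand the relations in (\ref{relapro}) follow immediately. Since $u(x,y)-u_{i,j}(x,y)=g(x)-u_i(x)$ carries no $y$-dependence, the three $x$-translated equalities are exactly the one-dimensional identities from the lemma for $P_h^\star$ in Section \ref{sec2}, applied along the row $I_{i-1},I_i,I_{i+1}$ with the common spacing $h_x$. The two $y$-translated equalities are trivial: because $u$ and $u_i$ are constant in $y$, we have $u(x,y\pm h_y)-u_{i,j\pm 1}(x,y\pm h_y)=g(x)-u_i(x)$, which matches the middle term. The case $u=y^{k+1}$ is handled verbatim after interchanging the roles of $x$ and $y$.

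I expect the tensor-reduction step to be the main obstacle, since it requires checking that the $y$-independent candidate $(P_h^\star g)(x)$ meets all four projection conditions at once. The only genuinely new point is that the averaged edge and corner conditions (\ref{2dph3})--(\ref{2dph4}) degenerate, for a function constant in $y$, into the single scalar edge-average condition of the one-dimensional projection; once this is observed, the statement is a direct consequence of uniqueness in Lemma \ref{2dprobound} together with the one-dimensional lemma.
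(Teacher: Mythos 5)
Your proof is correct. Note that the paper itself gives no proof of Lemma \ref{projectionlem} at all---it is stated with only the remark that the projection is local, implicitly deferring to the one-dimensional argument and the cited references---so there is no authorial proof to compare against; your write-up is a legitimate completion of the omitted step. The tensor-product reduction is sound: for $g(x)=x^{k+1}$ the $y$-independent candidate $(P_h^\star g)(x)$ satisfies (\ref{2dph1}) and (\ref{2dph2}) because both collapse to the interior moment condition $\int_{I_i}(P_h^\star g-g)\varphi\,dx=0$, and satisfies (\ref{2dph3}) and (\ref{2dph4}) because both collapse to the scalar edge-average identity $\tfrac12\bigl(P_h^\star g(x_{\ir}^-)+P_h^\star g(x_{\il}^+)\bigr)=\tfrac12\bigl(g(x_{\ir})+g(x_{\il})\bigr)$; uniqueness from Lemma \ref{2dprobound} (which is where the hypothesis that $k$ is even enters) then yields $\Pi_h^\star(x^{k+1})=P_h^\star(x^{k+1})$ columnwise, after which the $x$-shift identities are exactly the one-dimensional lemma of Section \ref{sec2} and the $y$-shift identities are trivial. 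The only things you lean on that the paper asserts without proof are the equivalent characterization of $P_h^\star$ (interior moments plus edge average) and the one-dimensional translation lemma itself, both of which are stated in Section \ref{sec2} and are fair to cite; if you wanted the argument fully self-contained you would add a line verifying that equivalence (both systems impose $k+1$ independent linear conditions on $\mathbb{P}^k$, and the span of $\{-(w,v_x)_j:\ v\in\mathbb{P}^k\}$ together with the cell average reproduces the $\mathbb{P}^{k-1}$ moments).
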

Similar to the one dimensional case, we also have the following superconvergence result.
\begin{Pro}
\label{2dprosuper}
For a given index $(i,j)$, suppose that $u$ is a $(k+1)$th degree polynomial function in $\mathbb{P}^{k+1}(D_{i,j})$, where $D_{i,j}=K_{i-1,j} \bigcup K_{i+1,j} \bigcup K_{i,j} \bigcup K_{i,j-1} \bigcup K_{i,j+1}$. If $h_x^{i-1}=h_x^{i}=h_x^{i+1}$ and $h_y^{j-1}=h_y^{j}=h_y^{j+1}$, then we have
\begin{align}\label{2dprosuperequ}
b_{i,j}(\Pi_h^{\star} u,v)=b_{i,j}(u,v) \quad \forall v \in \mathbb{Q}^{k}(K_{i,j}),
\end{align}
where $b_{i,j}(\cdot,\cdot)$ is defined by (\ref{2dDGop}).
\end{Pro}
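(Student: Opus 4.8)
The plan is to imitate the one-dimensional argument of Proposition \ref{superpro}, exploiting the tensor-product structure of the mesh and of $\Pi_h^\star$ to reduce the whole identity to the two pure powers $x^{k+1}$ and $y^{k+1}$ that Lemma \ref{projectionlem} already handles. Writing $e:=u-\Pi_h^\star u$, the claim is equivalent (by bilinearity of $b_{i,j}$) to $b_{i,j}(e,v)=0$ for all $v\in\mathbb{Q}^k(K_{i,j})$. First I would note that $\Pi_h^\star$ reproduces every $w\in\mathbb{Q}^k(K_{i,j})$: such a $w$ satisfies each of the defining relations (\ref{2dph1})--(\ref{2dph4}) with itself in place of $\Pi_h^\star w$, so the uniqueness in Lemma \ref{2dprobound} forces $\Pi_h^\star w=w$. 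Since any $u\in\mathbb{P}^{k+1}(D_{i,j})$ is a combination of monomials $x^ay^b$ with $a+b\le k+1$, and the only such monomials \emph{not} lying in $\mathbb{Q}^k$ are $x^{k+1}$ and $y^{k+1}$, linearity of $\Pi_h^\star$ and $b_{i,j}$ reduces the statement to $u=x^{k+1}$ and $u=y^{k+1}$. By the $x\leftrightarrow y$ symmetry of the scheme it then suffices to treat $u=x^{k+1}$.

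For $u=x^{k+1}$ I would first check that $\Pi_h^\star u$ is independent of $y$ and coincides with the one-dimensional projection $p(x):=P_h^\star x^{k+1}$ of Section \ref{sec2}. The $y$-independent candidate $p(x)$ satisfies (\ref{2dph1})--(\ref{2dph4}), because each of these collapses, after integrating out the trivial $y$-dependence, to one of the two defining relations of $P_h^\star$ (the $L^2$-orthogonality against $\mathbb{P}^{k-1}$ and the edge-mean condition); uniqueness then identifies $p(x)$ with $\Pi_h^\star u$. Hence $e=u-\Pi_h^\star u=e(x)$ depends on $x$ only and is exactly the one-dimensional projection error.

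Next I would split $b_{i,j}(e,v)$ into its $y$- and $x$-contributions. For the $y$-part, since $e$ is continuous across the horizontal edges and independent of $y$, the flux is $\tilde{e}(x,y_{\jr})=e(x)$, and the fundamental theorem of calculus gives $\int_{K_{i,j}}e\,v_y\,dxdy=\int_{I_i}e(x)\bigl(v(x,y_{\jr}^-)-v(x,y_{\jl}^+)\bigr)\,dx$, which cancels the horizontal flux term exactly. For the $x$-part, the key point is that the vertical central flux of $e$ vanishes identically along each vertical edge: the cross-cell translation relation of Lemma \ref{projectionlem} yields $e_{i,j}(x_{\il}^+,y)=e_{i+1,j}(x_{\ir}^+,y)$, and combining this with the edge-mean condition (\ref{2dph3}), which for the $y$-independent error forces $e_{i,j}(x_{\ir}^-)+e_{i,j}(x_{\il}^+)=0$, gives $\hat{e}(x_{\ir},y)\equiv0$ and likewise at $x_{\il}$; so the vertical flux term drops out. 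The remaining volume term $\int_{K_{i,j}}e\,v_x\,dxdy$ then vanishes: integrating out $y$ first turns $\int_{J_j}v\,dy$ into a degree-$k$ polynomial $\phi(x)$ on $I_i$, so the term equals $\int_{I_i}e(x)\phi'(x)\,dx=0$ because $\phi'\in\mathbb{P}^{k-1}(I_i)$ and $e$ is orthogonal to $\mathbb{P}^{k-1}$ by the one-dimensional relation contained in (\ref{2dph1}). Collecting the two parts gives $b_{i,j}(e,v)=0$, and the $y^{k+1}$ case follows by symmetry.

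The main obstacle I anticipate is the volume term $\int e\,v_x$: since $v\in\mathbb{Q}^k$, its derivative $v_x$ is of degree $k$ in $y$ and therefore does not lie in $\mathbb{Q}^{k-1}$, so the orthogonality (\ref{2dph1}) cannot be invoked directly. This is precisely what the reduction to pure powers buys us — once $e$ depends on $x$ alone, the $y$-integration is trivial and the genuinely one-dimensional orthogonality applies slice-by-slice. The second delicate point is establishing that the central flux of $e$ is zero as a \emph{function} along an entire edge rather than merely at nodes; this is where the uniform-mesh translation invariance of Lemma \ref{projectionlem} does the essential work, exactly as the corresponding one-dimensional relation does in Proposition \ref{superpro}.
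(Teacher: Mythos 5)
Your proposal is correct and follows essentially the same route as the paper's proof: reduce to the pure powers $x^{k+1}$ and $y^{k+1}$ via $\mathbb{Q}^k$-reproduction, observe that the projection error is then one-dimensional, kill the $x$-volume term by the moment condition (\ref{2dph1}) applied slice-wise, cancel the $y$-volume term against the horizontal fluxes by continuity in $y$, and annihilate the vertical central flux by combining the translation relation of Lemma \ref{projectionlem} with the edge-mean condition. Your write-up is in fact slightly more explicit than the paper's on the two delicate points (why $v_x\notin\mathbb{Q}^{k-1}$ is not an obstacle, and why $\Pi_h^\star x^{k+1}$ reduces to the one-dimensional projection); the only cosmetic difference is that you invoke (\ref{2dph3}) where the paper invokes (\ref{2dph4}), which for the $y$-independent error are interchangeable when $k\geq 2$ (for $k=0$ one must use (\ref{2dph4})).
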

\proof We provide the proof of this Proposition in the Appendix; see section \ref{proofofprop2d}.

\subsubsection{Proof of Theorem \ref{thmmain2d}}
\label{finishproof}
Let
\begin{align}
\xi=u_h-\Pi_h^\star u; \quad \eta = u-\Pi_h^\star u.
\end{align}
From (\ref{errorequ2d}), we obtain
\begin{align}\label{xietaequ}
\int_{K_{i,j}}(\xi)_t v\, dx dy-b_{i,j}(\xi,v)=\int_{K_{i,j}}(\eta)_t v\, dx dy-b_{i,j}(\eta,v), \quad \forall v \in \mathbb{Q}^k(K_{i,j}).
\end{align}
Take $v=\xi\, {\rev{\in W_h}}$, for the left hand side of (\ref{xietaequ}), we use {\rev{(\ref{energyconserving2d})}} to obtain
\begin{align}\label{xistab}
\sum_{i,j}\int_{K_{i,j}}(\xi)_t \xi\, dx dy-b_{i,j}(\xi,\xi)=\frac{1}{2}\frac{d}{dt}\|\xi\|^2.
\end{align}
For each element $K_{i,j}$, we consider the Taylor expansion of $u$ around $(x_i,y_j)$:
\begin{align}
u=Tu+Ru,
\end{align}
where
\begin{align*}
Tu=\sum_{l=0}^{k+1}\sum_{m=0}^{l}\frac{1}{(l-m)!m!}\frac{\partial^l u(x_i,y_j)}{\partial x^{l-m}\partial y^{m}}(x-x_i)^{l-m}(y-y_j)^{m},
\end{align*}
\begin{align*}
Ru=(k+2)\sum_{m=0}^{k+2}\frac{(x-x_i)^{k+2-m}(y-y_j)^{m}}{(k+2-m)!m!}\int_{0}^{1}(1-s)\frac{\partial^{k+2}u(x_i^s,y_j^s)}{\partial x^{k+2-m}\partial y^{m}}\ ds.
\end{align*}
with $x_i^s=x_i+s(x-x_i)$, $y_j^s=y_j+s(y-y_j)$. Clearly, $Tu \in \mathbb{P}^{k+1}(D_{i,j})$. By the linearity of the projection, and from (\ref{2dprosuperequ}), we then get
\begin{align}
b_{i,j}(\eta,v)&=b_{i,j}(Tu-\Pi_h^\star Tu,v)+b_{i,j}(Ru-\Pi_h^\star Ru,v)\nonumber \\
&=b_{i,j}(Ru-\Pi_h^\star Ru,v).
\end{align}
Again recalling the Bramble-Hilbert lemma in \cite{ciarlet2002finite}
, we have
\begin{align}\label{approxRu}
\|Ru\|_{L^{\infty}(D_{i,j})}\leq C h^{k+1}|u|_{H^{k+2}(D_{i,j})}.
\end{align}
Thus, this together with the standard approximate proposition of the projection (\ref{errorofpro2d}), and
the inverse inequality in (\ref{inverseinequality}) for $\xi$, we have
\begin{align}\label{etaest}
\sum_{i,j}b_{i,j}(\eta,\xi)\lesssim h^{2k+2}\|u\|_{k+2}^2+\|\xi\|^2.
\end{align}
From (\ref{xistab}), (\ref{etaest}) and (\ref{xietaequ}), we have
\begin{align}
\frac{1}{2}\frac{d}{dt}\|\xi\|^2 \lesssim h^{2k+2}\|u\|_{k+2}^2 +\|\xi\|^2.
\end{align}
This together with the approximation results (\ref{errorofpro2d}) and the initial discretization (\ref{initerr2d}), implies the desired error estimate (\ref{uniformest2}).

To end this section, we summarize our theoretical findings and numerical findings for
the 2D problem in Table \ref{tab2d}. Again our theoretical proof is sharp and consistent with the numerical results.
\begin{table}[!ht]\centering
\caption{Summarization of the $L^2$ error accuracy for the 2D case.}
	\label{tab2d}
\begin{tabular}{|c|c|c|c|c|}
  \hline
  % after \\: \hline or \cline{col1-col2} \cline{col3-col4} ...
  &  & mesh & $k$ is odd & $k$ is even \\ \cline{2-5}
  \multirow{4}*{$\mathbb{Q}^k$-space}&\multirow{2}*{Numerically} & uniform & \multirow{2}*{$k$th} & ($k+1$)th \\ \cline{3-3} \cline{5-5}
  & & non-uniform &  & $k$th \\ \cline{2-5}
  &\multirow{2}*{Theoretically} & uniform & \multirow{2}*{$k$th} & ($k+1$)th \\ \cline{3-3} \cline{5-5}
  & &non-uniform &  & $k$th \\
  \hline
  $\mathbb{P}^k$-space&Numerically/Theoretically & uniform/nonuniform & $k$th & $k$th \\ \hline
\end{tabular}
\end{table}
\section{Numerical examples}
\label{Num}
\setcounter{equation}{0}
\setcounter{figure}{0}
\setcounter{table}{0}

In this section, we present some numerical examples to verify our theoretical findings. In our numerical experiments,
we present the $E_2$, $E_A$, and $E_f$ errors, respectively. They are defined by
\begin{align}
E_2=&\|u-u_h\|.\\
E_A=&\left\{\begin{array}{c}\left(\frac{1}{N}\sum_{j=1}^{N}(\frac{1}{h_j}\int_{I_j}(u-u_h)\, dx)^2\right)^{\frac{1}{2}},\quad \text{ for one dimension,}\\
\left(\frac{1}{N_x N_y}\sum_{j=1}^{N_y}\sum_{i=1}^{N_x}(\frac{1}{h_x^i h_y^j}\int_{K_{i,j}}(u-u_h)\, dx dy)^2\right)^{\frac{1}{2}},\quad \text{ for two dimensions.}
\end{array}\right.\\
E_f=&\left(\frac{1}{N}\sum_{j=1}^{N}(u_{\jr}-\{u_h\}_{\jr})^2\right)^{\frac{1}{2}}.
\end{align}

\begin{Ex}
	\label{example1}
We consider the linear hyperbolic equation  with periodic boundary
condition:
	\begin{align}
\left\{\begin{array}{l}
	u_t+u_x=0,\quad (x,t)\in [0,2\pi]\times(0,T),\\
	u(x,0)=\exp(\sin(x)),\\
	u(0,t)=u(2\pi,t).
\end{array}\right.
	\end{align}
\end{Ex}
The exact solution to this problem is
\begin{equation}
u(x,t)=\exp(\sin(x-t)).
\end{equation}
We use two kinds of non-uniform meshes.  The first one is the non-uniform mesh with 30\% random perturbation
from $N$ uniform cells on $[0,2\pi]$, and the other mesh is constructed as follows.  Let
$\tilde{x}_{\jr}=jh$ for $j=0,\ldots,N$ where $h=\frac{2\pi}{N}$ and $\tilde{x}_{N+\frac{1}{2}}=1$, then we define the nodes of our mesh as follows
\begin{align*}
x_{2j-\frac{1}{2}}=\tilde{x}_{2j-\frac{1}{2}}+\alpha h,\quad j=1,\ldots, \left\lfloor\frac{N}{2}\right\rfloor.
\end{align*}
where $\lfloor m \rfloor$ denotes the maximal integer no more than $m$. Here the
parameter $\alpha$ satisfies $-1<\alpha<1$. For example, if $\alpha=0$ then the
resulting mesh is uniform.

We set the number of subintervals, $N=2^i\times10,i=0,\ldots,9$, in our experiments. We
use the DG scheme (\ref{DGscheme}) with central fluxes using $\mathbb{P}^k$ polynomials with $k=0,2,4$.
The initial datum is obtained by the standard $L^2$ projection. To reduce the time discretization error,
the seventh-order strong stability-preserving Runge-Kutta method \cite{gottlieb2001strong} with the time step $\Delta t= 0.01h$ is used.
The errors and corresponding convergence rates for {\rev{ the special nonuniform mesh with $\alpha=0.1$, the uniform mesh,
and random perturbation mesh are separately listed in the Tables \ref{tab1d1}-\ref{tab1d3}}}. Since the convergence rates have oscillations,
especially for $E_A$ and $E_f$, we have used the least square method to fit the convergence orders of the errors,
denoted by ``LS order" in the tables. We can find that $E_2$ only has $k$-th order accuracy, but $E_A$ and $E_f$ have ($k+1$)-th order
convergence for $k=2,4$, when the parameter of mesh $\alpha=0.1$. For the uniform mesh, i.e., $\alpha=0$, we observe the ($k+1$)-th
optimal convergence rates.  We can also find the convergence rates of the $L^2$ errors to be around $k+\frac{1}{2}$ for
the randomly perturbed meshes.

\begin{table}[!ht]\centering
	\caption{The errors and corresponding convergence rates for
the DG with $k=0,2,4$ in 1D. The terminal time $T=1$ and the parameter of the mesh $\alpha=0.1$.}
	\label{tab1d1}
	\begin{tabular}{|c|c|cc|cc|cc|}
		\hline
		\multirow{12}*{$k=0$}
		&$N$   & $E_2$    &Rate    &$E_A$ & Rate &$E_f$ & Rate\\ \cline{2-8}
&   10&    5.14E-01&     --&    1.23E-01&--&    1.15E-01&--\\ \cline{2-8}
&   20&    2.75E-01&     0.90&    7.32E-02&     0.75&    3.20E-02&     1.85\\ \cline{2-8}
&   40&    2.02E-01&     0.44&    6.98E-02&     0.07&    7.66E-03&     2.06\\ \cline{2-8}
&   80&    1.82E-01&     0.15&    7.01E-02&    -0.01&    5.75E-03&     0.41\\ \cline{2-8}
&  160&    1.77E-01&     0.04&    7.03E-02&    -0.00&    6.45E-03&    -0.16\\ \cline{2-8}
&  320&    1.75E-01&     0.01&    7.03E-02&    -0.00&    6.68E-03&    -0.05\\ \cline{2-8}
&  640&    1.75E-01&     0.00&    7.03E-02&    -0.00&    6.75E-03&    -0.01\\ \cline{2-8}
& 1280&    1.75E-01&     0.00&    7.04E-02&    -0.00&    6.76E-03&    -0.00\\ \cline{2-8}
& 2560&    1.75E-01&     0.00&    7.04E-02&    -0.00&    6.77E-03&    -0.00\\ \cline{2-8}
& 5120&    1.75E-01&     0.00&    7.04E-02&    -0.00&    6.77E-03&    -0.00\\ \cline{2-8}
&LS order&          &   0.12&               &   0.05&           &       0.32 \\ \hline
		\multirow{11}*{$k=2$}
&   10&    9.30E-03&     --&    1.09E-03&--&    2.07E-03&--\\ \cline{2-8}
&   20&    7.82E-04&     3.57&    8.21E-05&     3.73&    2.20E-04&     3.23\\ \cline{2-8}
&   40&    1.33E-04&     2.55&    9.77E-06&     3.07&    2.10E-05&     3.39\\ \cline{2-8}
&   80&    2.00E-05&     2.73&    9.26E-07&     3.40&    2.19E-06&     3.26\\ \cline{2-8}
&  160&    4.21E-06&     2.25&    1.21E-07&     2.94&    2.36E-07&     3.22\\ \cline{2-8}
&  320&    9.99E-07&     2.07&    2.10E-08&     2.52&    1.48E-08&     4.00\\ \cline{2-8}
&  640&    2.46E-07&     2.02&    1.98E-09&     3.41&    3.35E-09&     2.14\\ \cline{2-8}
& 1280&    6.13E-08&     2.01&    3.37E-10&     2.55&    8.36E-11&     5.32\\ \cline{2-8}
& 2560&    1.53E-08&     2.00&    4.64E-12&     6.18&    7.30E-11&     0.20\\ \cline{2-8}
& 5120&    3.83E-09&     2.00&    1.14E-12&     2.03&    8.80E-12&     3.05\\ \cline{2-8}
&LS order&          &   2.28&               &   3.27&           &       3.17 \\ \hline
\multirow{11}*{$k=4$}
&   10&    1.21E-04&    --&    2.18E-06&--&    1.61E-05&--\\ \cline{2-8}
&   20&    1.62E-06&     6.22&    6.45E-08&     5.08&    6.56E-07&     4.62\\ \cline{2-8}
&   40&    9.60E-08&     4.08&    2.82E-09&     4.51&    1.41E-08&     5.53\\ \cline{2-8}
&   80&    5.28E-09&     4.19&    1.50E-10&     4.24&    4.13E-10&     5.10\\ \cline{2-8}
&  160&    3.22E-10&     4.04&    1.72E-12&     6.44&    1.64E-11&     4.65\\ \cline{2-8}
&  320&    1.99E-11&     4.02&    8.17E-14&     4.40&    2.34E-13&     6.13\\ \cline{2-8}
&  640&    1.24E-12&     4.00&    3.41E-16&     7.90&    1.81E-14&     3.69\\ \cline{2-8}
& 1280&    7.75E-14&     4.00&    3.18E-17&     3.42&    4.93E-16&     5.19\\ \cline{2-8}
& 2560&    4.85E-15&     4.00&    1.27E-18&     4.64&    1.59E-17&     4.95\\ \cline{2-8}
& 5120&    3.03E-16&     4.00&    5.35E-20&     4.57&    4.69E-19&     5.08\\ \cline{2-8}
&LS order&          &   4.16&               &   5.14&           &       5.00 \\ \hline
	\end{tabular}
\end{table}

\begin{table}[!ht]\centering
	\caption{The errors and corresponding convergence rates for
the DG with $k=0,2,4$ {\rev{using the uniform mesh}} in 1D. The terminal time $T=1$.}
	\label{tab1d2}
	\begin{tabular}{|c|c|cc|cc|cc|}
		\hline
		\multirow{12}*{$k=0$}
		&$N$   & $E_2$    &Rate    &$E_A$ & Rate &$E_f$ & Rate\\ \cline{2-8}
&   10&    4.82E-01&     --&    1.07E-01&--&    1.22E-01&--\\ \cline{2-8}
&   20&    2.16E-01&     1.16&    3.06E-02&     1.80&    3.53E-02&     1.79\\ \cline{2-8}
&   40&    1.03E-01&     1.07&    7.91E-03&     1.95&    9.16E-03&     1.95\\ \cline{2-8}
&   80&    5.09E-02&     1.02&    1.99E-03&     1.99&    2.31E-03&     1.99\\ \cline{2-8}
&  160&    2.54E-02&     1.01&    4.99E-04&     2.00&    5.79E-04&     2.00\\ \cline{2-8}
&  320&    1.27E-02&     1.00&    1.25E-04&     2.00&    1.45E-04&     2.00\\ \cline{2-8}
&  640&    6.34E-03&     1.00&    3.12E-05&     2.00&    3.62E-05&     2.00\\ \cline{2-8}
& 1280&    3.17E-03&     1.00&    7.81E-06&     2.00&    9.06E-06&     2.00\\ \cline{2-8}
& 2560&    1.58E-03&     1.00&    1.95E-06&     2.00&    2.27E-06&     2.00\\ \cline{2-8}
& 5120&    7.92E-04&     1.00&    4.88E-07&     2.00&    5.66E-07&     2.00\\ \cline{2-8}
&LS order&          &   1.02&               &   1.98&           &      1.98 \\ \hline
		\multirow{11}*{$k=2$}
&   10&    9.11E-03&     --&    1.27E-03&--&    2.50E-03&--\\ \cline{2-8}
&   20&    5.47E-04&     4.06&    1.78E-05&     6.15&    8.32E-05&     4.91\\ \cline{2-8}
&   40&    6.12E-05&     3.16&    5.25E-07&     5.08&    3.13E-06&     4.73\\ \cline{2-8}
&   80&    7.52E-06&     3.03&    1.23E-08&     5.42&    3.41E-07&     3.20\\ \cline{2-8}
&  160&    9.32E-07&     3.01&    3.29E-10&     5.22&    2.44E-08&     3.81\\ \cline{2-8}
&  320&    1.16E-07&     3.00&    1.45E-11&     4.50&    3.58E-10&     6.09\\ \cline{2-8}
&  640&    1.45E-08&     3.00&    1.84E-13&     6.31&    1.27E-10&     1.50\\ \cline{2-8}
& 1280&    1.82E-09&     3.00&    1.08E-14&     4.08&    5.42E-12&     4.55\\ \cline{2-8}
& 2560&    2.27E-10&     3.00&    4.23E-16&     4.68&    1.35E-13&     5.32\\ \cline{2-8}
& 5120&    2.84E-11&     3.00&    6.77E-18&     5.97&    2.95E-14&     2.20\\ \cline{2-8}
&LS order&          &   3.08&               &   5.18&           &      4.04 \\ \hline
\multirow{11}*{$k=4$}
&   10&    1.18E-04&    --&    1.56E-06&--&    2.03E-05&--\\ \cline{2-8}
&   20&    1.03E-06&     6.84&    2.28E-08&     6.09&    3.13E-07&     6.02\\ \cline{2-8}
&   40&    2.76E-08&     5.22&    1.27E-10&     7.49&    5.78E-09&     5.76\\ \cline{2-8}
&   80&    8.11E-10&     5.09&    1.83E-12&     6.11&    8.19E-11&     6.14\\ \cline{2-8}
&  160&    2.49E-11&     5.03&    4.99E-15&     8.52&    1.94E-12&     5.40\\ \cline{2-8}
&  320&    7.78E-13&     5.00&    2.19E-17&     7.83&    2.71E-14&     6.16\\ \cline{2-8}
&  640&    2.43E-14&     5.00&    1.71E-19&     7.00&    3.79E-16&     6.16\\ \cline{2-8}
& 1280&    7.59E-16&     5.00&    5.35E-21&     5.00&    2.06E-18&     7.53\\ \cline{2-8}
& 2560&    2.37E-17&     5.00&    2.05E-23&     8.03&    1.20E-19&     4.10\\ \cline{2-8}
& 5120&    7.41E-19&     5.00&    6.25E-26&     8.35&    1.50E-21&     6.32\\ \cline{2-8}
&LS order&          &   5.14&               &   7.15&           &       5.98 \\ \hline
	\end{tabular}
\end{table}

\begin{table}[!ht]\centering
	\caption{The errors and corresponding convergence rates for
the DG with $k=0,2,4$ using random perturbation mesh in 1D. The terminal time $T=1$.}
	\label{tab1d3}
	\begin{tabular}{|c|c|cc|cc|cc|}
		\hline
		\multirow{11}*{$k=0$}
		&$N$   & $E_2$    &Rate    &$E_A$ & Rate &$E_f$ & Rate\\ \cline{2-8}
&   10&    6.99E-01&     --&    2.30E-01&--&    1.48E-01&--\\ \cline{2-8}
&   20&    9.78E-01&    -0.48&    4.30E-01&    -0.90&    1.82E-01&    -0.30\\ \cline{2-8}
&   40&    3.06E-01&     1.68&    1.16E-01&     1.89&    4.26E-02&     2.10\\ \cline{2-8}
&   80&    2.90E-01&     0.08&    1.12E-01&     0.05&    2.54E-02&     0.75\\ \cline{2-8}
&  160&    1.63E-01&     0.83&    6.49E-02&     0.79&    1.48E-02&     0.78\\ \cline{2-8}
&  320&    1.74E-01&    -0.10&    6.99E-02&    -0.11&    1.39E-02&     0.09\\ \cline{2-8}
&  640&    8.47E-02&     1.04&    3.37E-02&     1.05&    5.15E-03&     1.43\\ \cline{2-8}
& 1280&    6.77E-02&     0.32&    2.69E-02&     0.33&    2.96E-03&     0.80\\ \cline{2-8}
& 2560&    5.35E-02&     0.34&    2.14E-02&     0.33&    2.08E-03&     0.51\\ \cline{2-8}
& 5120&    2.71E-02&     0.98&    1.08E-02&     0.98&    1.01E-03&     1.04\\ \cline{2-8}
& LS order&    &     0.53&            &     0.52&            &     0.83\\ \hline
		\multirow{10}*{$k=2$}
&   10&    2.73E-02&     --&    1.89E-03&--&    6.94E-03&--\\ \cline{2-8}
&   20&    7.18E-03&     1.93&    1.72E-04&     3.46&    4.07E-04&     4.09\\ \cline{2-8}
&   40&    1.05E-03&     2.78&    1.46E-05&     3.55&    3.14E-05&     3.70\\ \cline{2-8}
&   80&    1.10E-04&     3.24&    4.44E-06&     1.72&    7.56E-06&     2.05\\ \cline{2-8}
&  160&    3.27E-05&     1.75&    4.76E-07&     3.22&    8.11E-07&     3.22\\ \cline{2-8}
&  320&    4.21E-06&     2.96&    7.29E-08&     2.71&    1.16E-07&     2.81\\ \cline{2-8}
&  640&    1.00E-06&     2.07&    1.07E-08&     2.77&    1.67E-08&     2.80\\ \cline{2-8}
& 1280&    8.97E-08&     3.48&    1.52E-09&     2.81&    2.12E-09&     2.98\\ \cline{2-8}
& 2560&    3.12E-08&     1.52&    1.99E-10&     2.93&    2.52E-10&     3.07\\ \cline{2-8}
& 5120&    6.36E-09&     2.29&    2.80E-11&     2.83&    3.52E-11&     2.84\\ \cline{2-8}
& LS order&    &     2.51&            &     2.83&            &     2.98\\ \hline
\multirow{11}*{$k=4$}
&   10&    2.28E-04&    --&    7.01E-06&--&    3.73E-05&--\\ \cline{2-8}
&   20&    7.81E-06&     4.87&    3.74E-07&     4.23&    1.92E-06&     4.28\\ \cline{2-8}
&   40&    2.72E-07&     4.84&    6.81E-09&     5.78&    3.37E-08&     5.83\\ \cline{2-8}
&   80&    1.53E-08&     4.15&    3.41E-10&     4.32&    1.71E-09&     4.31\\ \cline{2-8}
&  160&    1.57E-09&     3.29&    1.34E-11&     4.66&    6.94E-11&     4.62\\ \cline{2-8}
&  320&    7.00E-11&     4.48&    3.46E-13&     5.28&    1.38E-12&     5.65\\ \cline{2-8}
&  640&    5.60E-12&     3.65&    1.43E-14&     4.59&    5.33E-14&     4.70\\ \cline{2-8}
& 1280&    7.04E-14&     6.31&    4.29E-16&     5.06&    1.57E-15&     5.09\\ \cline{2-8}
& 2560&    1.88E-15&     5.23&    1.70E-17&     4.66&    5.10E-17&     4.94\\ \cline{2-8}
& 5120&    1.48E-16&     3.66&    5.87E-19&     4.85&    1.67E-18&     4.93\\ \cline{2-8}
& LS order&    &         4.46&            &     4.85&            &     4.95\\ \hline
	\end{tabular}
\end{table}

In two dimensions, we consider the following problem.

\begin{Ex}
\label{example2}
We solve the following linear hyperbolic equation with periodic boundary condition:
\begin{align}
\left\{\begin{array}{l}
u_t+u_x+u_y=0, \quad (x,y,t)\in [0,2\pi]^2\times(0,T),\\
u(x,y,0)=\sin(x+y).
\end{array}\right.
\end{align}
\end{Ex}
The exact solution to this problem is
\begin{align}
u(x,y,t)=\sin(x+y-2t).
\end{align}
In each dimension, we apply the same partition as in the one-dimensional case.  We choose the parameters $\alpha=0.3$ (see Fig \ref{fig1}). The tensor product space $\mathbb{Q}^k$ or the piecewise $k$th polynomial $\mathbb{P}^{k}$ is taken as the approximation space. We test the DG scheme with the central flux, and take the terminal time $T=1$.
When $\mathbb{Q}^k$ elements are used and $k=0,2$, {\rev{for the special nonuniform mesh with $\alpha=0.3$}}, the sub-optimal $k$th convergence rates can be observed which are listed in
Table \ref{tab2d1}. For the uniform mesh, i.e., $\alpha=0$, the scheme has ($k+1$)th optimal convergence orders, see Table {\ref{tab2d2}}.
However, for $\mathbb{P}^k$ finite element space, it only has $k$th suboptimal convergence rates no matter whether $k$ is even or odd, see Table \ref{tab2d3}.

\begin{table}[!ht]\centering
	\caption{The errors and corresponding convergence rates for
the DG with $k=0,2$ in 2D. The terminal time $T=1$ and the parameter of the mesh $\alpha=0.3$.}
	\label{tab2d1}
	\begin{tabular}{|c|c|cc|cc|}
		\hline
\multirow{8}*{$\mathbb{Q}^0$}
&$N\times N$         &     $E_2$  &Rate     &$E_A$       & Rate \\ \cline{2-6}
& $    4\times    4$ &    3.67E+00&    --&    3.56E-01&--\\ \cline{2-6}
& $    8\times    8$ &    2.11E+00&     0.80&    2.34E-01&     0.60\\ \cline{2-6}
& $   16\times   16$ &    1.74E+00&     0.28&    2.82E-01&    -0.27\\ \cline{2-6}
& $   32\times   32$ &    1.67E+00&     0.06&    3.02E-01&    -0.10\\ \cline{2-6}
& $   64\times   64$ &    1.66E+00&     0.01&    3.07E-01&    -0.02\\ \cline{2-6}
& $  128\times  128$ &    1.65E+00&     0.00&    3.08E-01&    -0.01\\ \cline{2-6}
& LS order           &            &     0.20&            &     -0.01 \\ \hline
\multirow{7}*{$\mathbb{Q}^2$}
& $    5\times    5$ &    1.43E-01&       --&    4.08E-03&--\\ \cline{2-6}
& $    9\times    9$ &    4.28E-02&     1.74&    1.64E-03&     1.32\\ \cline{2-6}
& $   17\times   17$ &    1.24E-02&     1.78&    3.06E-04&     2.42\\ \cline{2-6}
& $   33\times   33$ &    3.33E-03&     1.90&    2.11E-05&     3.85\\ \cline{2-6}
& $   65\times   65$ &    8.67E-04&     1.94&    3.12E-06&     2.76\\ \cline{2-6}
& $  129\times  129$ &    2.21E-04&     1.97&    6.57E-07&     2.25\\ \cline{2-6}
& LS order           &            &     1.99&            &     2.85 \\ \hline
	\end{tabular}
\end{table}

\begin{table}[!ht]\centering
	\caption{The errors and corresponding convergence rates for
the DG with $k=0,2$ {\rev{using the uniform mesh}} in 2D. The terminal time $T=1$.}
	\label{tab2d2}
	\begin{tabular}{|c|c|cc|cc|}
		\hline
\multirow{8}*{$\mathbb{Q}^0$}
&$N\times N$         &     $E_2$  &Rate     &$E_A$       & Rate \\ \cline{2-6}
& $    4\times    4$ &    3.65E+00&    --&    4.07E-01&--\\ \cline{2-6}
& $    8\times    8$ &    1.63E+00&     1.17&    1.34E-01&     1.61\\ \cline{2-6}
& $   16\times   16$ &    7.43E-01&     1.13&    3.56E-02&     1.91\\ \cline{2-6}
& $   32\times   32$ &    3.60E-01&     1.04&    9.04E-03&     1.98\\ \cline{2-6}
& $   64\times   64$ &    1.79E-01&     1.01&    2.27E-03&     1.99\\ \cline{2-6}
& $  128\times  128$ &    8.91E-02&     1.00&    5.68E-04&     2.00\\ \cline{2-6}
& LS order           &            &     1.07&            &     1.92 \\ \hline
\multirow{7}*{$\mathbb{Q}^2$}
& $    4\times    4$ &    1.99E-01&     --&    8.35E-03&--\\ \cline{2-6}
& $    8\times    8$ &    1.27E-02&     3.97&    7.97E-05&     6.71\\ \cline{2-6}
& $   16\times   16$ &    1.21E-03&     3.39&    3.31E-06&     4.59\\ \cline{2-6}
& $   32\times   32$ &    1.51E-04&     2.99&    1.85E-07&     4.16\\ \cline{2-6}
& $   64\times   64$ &    1.88E-05&     3.01&    1.87E-09&     6.63\\ \cline{2-6}
& $  128\times  128$ &    2.34E-06&     3.01&    1.17E-10&     4.00\\ \cline{2-6}
& LS order           &            &     3.23&            &     5.16 \\ \hline
	\end{tabular}
\end{table}

\begin{table}[!ht]\centering
	\caption{The errors and corresponding convergence rates for
the DG with $k=0,2$ using random perturbation mesh in 2D. The terminal time $T=1$.}
	\label{tab2d3}
	\begin{tabular}{|c|c|cc|cc|}
		\hline
\multirow{8}*{$\mathbb{Q}^0$}
&$N\times N$         &     $E_2$  &Rate     &$E_A$       & Rate \\ \cline{2-6}
& $    4\times    4$ &    3.76E+00&    --&    3.80E-01&--\\ \cline{2-6}
& $    8\times    8$ &    1.78E+00&     1.08&    1.55E-01&     1.30\\ \cline{2-6}
& $   16\times   16$ &    1.23E+00&     0.53&    1.56E-01&    -0.01\\ \cline{2-6}
& $   32\times   32$ &    8.83E-01&     0.48&    1.31E-01&     0.25\\ \cline{2-6}
& $   64\times   64$ &    7.05E-01&     0.33&    1.07E-01&     0.30\\ \cline{2-6}
& $  128\times  128$ &    5.63E-01&     0.32&    8.85E-02&     0.28\\ \cline{2-6}
& LS order           &            &     0.52&            &     0.35 \\ \hline
\multirow{7}*{$\mathbb{Q}^2$}
& $    4\times    4$ &    4.62E-01&     --&    1.17E-02&--\\ \cline{2-6}
& $    8\times    8$ &    3.22E-02&     3.84&    1.31E-03&     3.15\\ \cline{2-6}
& $   16\times   16$ &    1.04E-02&     1.64&    1.98E-04&     2.73\\ \cline{2-6}
& $   32\times   32$ &    2.06E-03&     2.33&    2.23E-05&     3.15\\ \cline{2-6}
& $   64\times   64$ &    3.34E-04&     2.63&    2.93E-06&     2.93\\ \cline{2-6}
& $  128\times  128$ &    3.56E-05&     3.23&    4.41E-07&     2.74\\ \cline{2-6}
& LS order           &            &     2.58&            &     2.91 \\ \hline
	\end{tabular}
\end{table}

\begin{table}[!ht]\centering
	\caption{The errors and corresponding convergence rates for
the DG with using $\mathbb{P}^k$ finite element space in 2D. The terminal time $T=1$ and the parameter of the mesh $\alpha=0$.}
	\label{tab2}
	\begin{tabular}{|c|c|cc|cc|}
		\hline
\multirow{7}*{$\mathbb{P}^1$}
&$N\times N$         &     $E_2$  &Rate     &$E_A$       & Rate \\ \cline{2-6}
& $    4\times    4$ &    1.51E+00&    --&    7.53E-02&--\\ \cline{2-6}
& $    8\times    8$ &    6.88E-01&     1.13&    2.12E-02&     1.83\\ \cline{2-6}
& $   16\times   16$ &    3.30E-01&     1.06&    5.77E-03&     1.88\\ \cline{2-6}
& $   32\times   32$ &    1.63E-01&     1.02&    1.48E-03&     1.96\\ \cline{2-6}
& $   64\times   64$ &    8.11E-02&     1.01&    3.72E-04&     1.99\\ \cline{2-6}
& $  128\times  128$ &    4.05E-02&     1.00&    9.32E-05&     2.00\\ \hline
\multirow{6}*{$\mathbb{P}^2$}
& $    4\times    4$ &    3.81E-01&     --&    7.83E-03&--\\ \cline{2-6}
& $    8\times    8$ &    6.31E-02&     2.59&    1.82E-03&     2.11\\ \cline{2-6}
& $   16\times   16$ &    2.01E-02&     1.65&    7.97E-05&     4.51\\ \cline{2-6}
& $   32\times   32$ &    5.31E-03&     1.92&    5.05E-06&     3.98\\ \cline{2-6}
& $   64\times   64$ &    1.34E-03&     1.98&    3.29E-07&     3.94\\ \cline{2-6}
& $  128\times  128$ &    3.37E-04&     2.00&    2.04E-08&     4.01\\ \hline
\multirow{6}*{$\mathbb{P}^3$}
& $    4\times    4$ &    2.63E-01&     1.93&    1.30E-03&--\\ \cline{2-6}
& $    8\times    8$ &    7.84E-03&     5.07&    7.47E-05&     4.12\\ \cline{2-6}
& $   16\times   16$ &    3.83E-04&     4.35&    3.63E-07&     7.68\\ \cline{2-6}
& $   32\times   32$ &    3.39E-05&     3.50&    2.74E-08&     3.73\\ \cline{2-6}
& $   64\times   64$ &    3.70E-06&     3.19&    4.84E-10&     5.82\\ \cline{2-6}
& $  128\times  128$ &    4.46E-07&     3.05&    1.94E-11&     4.64\\ \cline{2-6}
& $  256\times  256$ &    5.56E-08&     3.01&    9.59E-13&     4.34\\ \hline
	\end{tabular}
\end{table}

\begin{figure}[h!]
  \centering
  \includegraphics[width=0.5\textwidth]{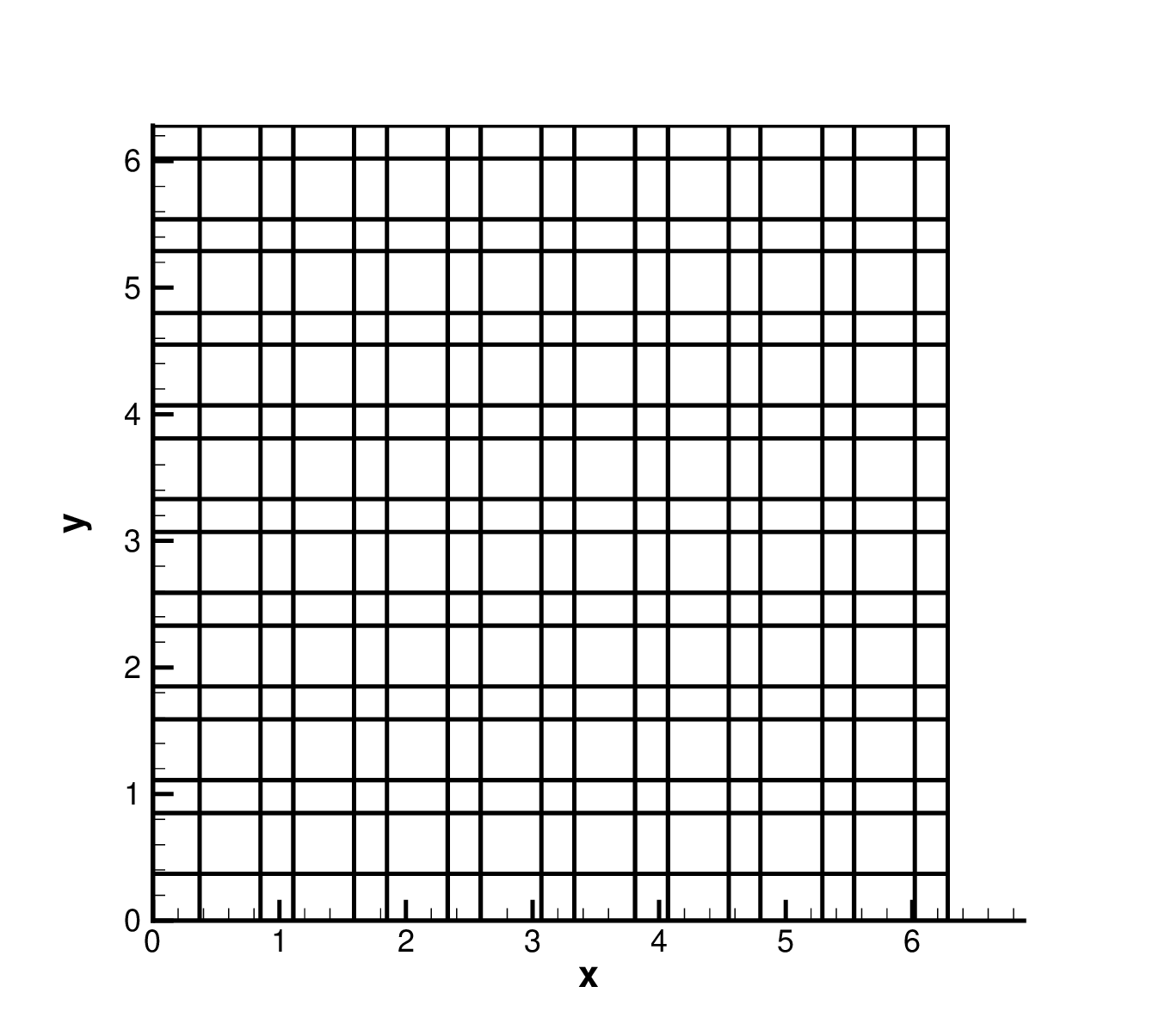}\\
  \caption{Example of non-uniform two dimensional mesh with $h=\frac{2\pi}{17}$ and $\alpha=0.3$}\label{fig1}
\end{figure}

\section{Concluding remarks}
\label{concluding}
In this paper, we have studied the error estimates of the DG methods for linear hyperbolic equations with central fluxes
when the degree of piecewise polynomial is even. Numerically, we provide a counter example to show that the scheme only
has the sub-optimal convergence rates for a particular regular non-uniform mesh sequence. Theoretically, we show that
the scheme does have the optimal accuracy for uniform meshes in both one and multi-dimensional problems. Our proof
does not have the constraint on the number of cells in the mesh.
In numerical experiments, we have also observed that the superconvergence results for the errors of the
cell averages and numerical fluxes. The theoretical proof of these superconvergence results would be interesting and
challenging for our future work.

\appendix
\section{Appendix: Proof of a few technical lemmas and Propositions.}
\label{appendix}
\setcounter{equation}{0}
In this appendix, we provide the proof of some of the technical lemmas and propositions in the error estimates.
\subsection{Proof of Lemma \ref{lemprojection}}
\label{proofoflemmaprojection}
\begin{proof}
Note that the procedure to find $P_h^\star w \in \mathbb{P}^{k}([-1,1])$ is to solve a linear system, so the existence and uniqueness are equivalent. Thus, we only prove the uniqueness of the projection $P_h^\star$. We set $w_I(x)=P_h^\star w(x)$ with $w(x)=0$ and would like to prove $w_I(x)\equiv0$. By the definition of the projection $P_h^\star w(x)$, then
\begin{align}
&\widetilde{P_h}(w_I;v)=-\int_{-1}^{1} w_I v_x \, dx +\frac{w_I(-1)+w_I(1)}{2}(v(1)-v(-1))=0, \quad \forall v\in \mathbb{P}^{k}([-1,1]), \label{equ1}\\
&\int_{-1}^{1}w_I\, dx=0. \label{equ2}
\end{align}
If $k=0$, the lemma obviously holds. If $k\geq 2$, we take $v=x$ in (\ref{equ1}), we have
\begin{align}
0=\widetilde{P_h}(w_I;x)&=-\int_{-1}^{1} w_I \, dx + (w_I(-1)+w_I(1)) \nonumber\\
&=(w_I(-1)+w_I(1)),\label{equ3}
\end{align}
here we have used (\ref{equ2}). Thus in (\ref{equ1}), we have
\begin{align*}
0=\widetilde{P_h}(w_I;v)=-\int_{-1}^{1} w_I v_x \, dx, \quad \forall v \in \mathbb{P}^k([-1,1]),
\end{align*}
which means that
\begin{align}
w_I=w_k L_k(x),
\end{align}
where $w_k$ is a constant, and $L_k(x)$ is the standard $k$-th degree
Legendre polynomial on the interval $[-1,1]$. By (\ref{equ3}) and the fact that $k$ is even, we obtain $w_k\equiv0$.
This finished the proof of uniqueness. For the proof of the bound (\ref{projectionbound}), the arguments are the
same as in \cite{liu2018cdg,liu2019pk}, hence we omit them here.
\end{proof}

\subsection{Proof of Lemma \ref{2dprobound}}
\label{proofoflem2dpro}
\begin{proof}
When $k=0$, the Lemma obviously holds true. For $k\geq2$, assume that $u\equiv 0$. From (\ref{2dph1}), we have $\Pi_h^\star u \bot \mathbb{Q}^{k-1}([-1,1]^2)$, thus we have the following expression of $\Pi_h^\star u$,
\begin{align}
\Pi_h^\star u=\sum_{m=0}^{k-1}\alpha_{k,m}L_k(x)L_m(y)+\sum_{m=0}^{k-1}\alpha_{m,k}L_{m}(x)L_{k}(y)+\alpha_{k,k}L_{k}(x)L_{k}(y).
\end{align}
From (\ref{2dph2}), we take $\varphi(x)=L_m(x)$, $m=0,1,\ldots,k-1$, to obtain
\begin{align}
\alpha_{m,k}=0, \quad m=0,\ldots,k-1.
\end{align}
By the same arguments, we have
\begin{align}
\alpha_{k,m}=0, \quad m=0,\ldots,k-1.
\end{align}
Thus $\Pi_h^\star u=\alpha_{k,k}L_{k}(x)L_{k}(y)$.  Finally, by (\ref{2dph4}) and the fact that $k$ is even, we have $\alpha_{k,k}=0$.
That means $\Pi_h^\star u\equiv 0$. We have now finished the proof of uniqueness, hence also existence. By the same arguments as
the proof of Lemma 2.1 in \cite{liu2019pk}, we can obtain (\ref{2dprojectionbound}).
\end{proof}
\subsection{Proof of Proposition \ref{2dprosuper}}
\label{proofofprop2d}
\begin{proof}
If $u\in \mathbb{Q}^{k}$, then $\Pi_h^\star u = u$ implies (\ref{2dprosuperequ}) holds true. Thus we only need to prove
the cases $u=x^{k+1}$ or $y^{k+1}$. We will just show the details of the proof for one case; namely $b_{i,j}(\Pi_h^\star u,v)=b_{i,j}(u, v), \ {\rev{\forall v \in \mathbb{Q}^k(K_{i,j})}}$, is true when $u=x^{k+1}$. We denote $\Pi_e=\Pi_h^\star u -u$. By the definition of $b_{i,j}(\cdot,\cdot)$, we have
\begin{align}
&b_{i,j}(\Pi_e,v)=\int_{K_{i,j}} \Pi_e v_x +\Pi_e v_y \, dx dy \nonumber\\
&-\int_{J_{j}}\frac{\Pi_e(x_{\ir}^+,y)+\Pi_e(x_{\ir}^-,y)}{2}v(x_{\ir}^-,y)-\frac{\Pi_e(x_{\il}^+,y)+\Pi_e(x_{\il}^-,y)}{2}v(x_{\il}^+,y) \, dy\nonumber \\
&-\int_{I_{i}}\frac{\Pi_e(x,y_{\jr}^+)+\Pi_e(x,y_{\jr}^-)}{2}v(x,y_{\jr}^-)-\frac{\Pi_e(x,y_{\jl}^+)+\Pi_e(x,y_{\jl}^-)}{2}v(x,y_{\jl}^+) \, dx.
\end{align}
We first have $(\Pi_h^\star u-u)_y=0$ due to the special form of $u$. {\rev{Since $v_x$ is a polynomial of degree at most $k-1$ in $x$}}, thus from (\ref{2dph1}), we have
\begin{align}
\int_{K_{i,j}} \Pi_e v_x \, dx dy =0\label{term1},
\end{align}
and since $\Pi_e$ is continuous corresponding to the variable $y$, after applying integration by parts, we obtain
\begin{align}
&\int_{K_{i,j}}\Pi_e v_y \, dx dy \nonumber\\ &-\int_{I_{i}}\frac{\Pi_e(x,y_{\jr}^+)+\Pi_e(x,y_{\jr}^-)}{2}v(x,y_{\jr}^-)-\frac{\Pi_e(x,y_{\jl}^+)+\Pi_e(x,y_{\jl}^-)}{2}v(x,y_{\jl}^+) \, dx\nonumber\\
&=-\int_{K_{i,j}}(\Pi_e)_y v\, dx dy=0.\label{term2}
\end{align}
From Lemma \ref{projectionlem}, we have
\begin{align}
\Pi_e(x_{\ir}^+,y)+\Pi_e(x_{\ir}^-,y)=&u(x_{\ir},y)-u(x_{\il},y)+\Pi_h^\star u(x_{\il}^+)-u(x_{\ir},y) \nonumber \\
&+\Pi_h^\star u (x_{\ir}^-,y)-u(x_{\ir},y)\nonumber \\
=&\Pi_h^\star u(x_{\il}^+)+\Pi_h^\star u (x_{\ir}^-,y)-u(x_{\il},y)-u(x_{\ir},y)\nonumber \\
=&0.\label{term3}
\end{align}
The last equality is from (\ref{2dph4}). By the same arguments,
\begin{align}\label{term4}
\Pi_e(x_{\il}^+,y)+\Pi_e(x_{\il}^-,y)=0.
\end{align}
From (\ref{term1})-(\ref{term4}), we have $b_{i,j}(\Pi_e,v)=0$.
\end{proof}

\end{document}